\def\isom{\buildrel\sim\over\longrightarrow}
\numberwithin{equation}{section}
\def\xyellowspace{%
  \sbox0{\colorbox{yellow}{\strut\ }}
  \dimen0=\wd0\relax
  \hskip0pt\cleaders\box0\hskip\dimen0\hskip0pt}
\gdef\makeyellowspace{\let \xyellowspace\catcode`\ =\active}%
\def\?#1{\colorbox{yellow}{\strut#1}}
\def\urlfont{\DeclareFontFamily{OT1}{cmtt}{\hyphenchar\font='057}
              \normalfont\ttfamily \hyphenpenalty=10000}
\DeclareFontFamily{OT1}{rsfs10}{}
\DeclareFontShape{OT1}{rsfs10}{m}{n}{ <-> rsfs10 }{}
\DeclareMathAlphabet{\mathscript}{OT1}{rsfs10}{m}{n}
\DeclareMathOperator{\Hom}{Hom}     
\DeclareMathOperator{\Pic}{Pic}     
\DeclareMathOperator{\rk}{rk}       
\DeclareMathOperator{\Mov}{Mov}     
\DeclareMathOperator{\Relint}{Relint}  
\DeclareMathOperator{\Min}{Min}     
\DeclareMathOperator{\nM}{\overline{Min}}
\DeclareMathOperator{\MnM}{M}
\newcommand{\RR}{\mathbb{R}}
\newcommand{\ZZ}{\mathbb{Z}}
\newcommand{\KK}{\mathbb{K}}
\newcommand{\QQ}{\mathbb{Q}}
\title[Toric varieties and Gr\"obner bases]{Toric varieties and Gr\"obner bases:\\ the complete $\QQ$-factorial case}
\author[M. Rossi and L.Terracini]{Michele Rossi and Lea Terracini}
\date{\today}
\address{Dipartimento di Matematica, Universit\`a di Torino,
via Carlo Alberto 10, 10123 Torino} \email{michele.rossi@unito.it,
lea.terracini@unito.it}
\thanks{The first author was partially supported by the I.N.D.A.M. as a member of the G.N.S.A.G.A.}
\def \Si{\Sigma }
\def \aa{\mathbf{a}}
\def \bb{\mathbf{b}}
\def \cc{\mathbf{c}}
\def \e{\mathbf{e}}
\def \q{\mathbf{q}}
\def \u{\mathbf{u}}
\def \v{\mathbf{v}}
\def \n{\mathbf{n}}
\def \m{\mathbf{m}}
\def \w{\mathbf{w}}
\def \t{\mathbf{t}}
\def \z{\mathbf{z}}
\def \x{\mathbf{x}}
\def \y{\mathbf{y}}
\def \1{\mathbf{1}}
\def \0{\mathbf{0}}
\def\p2{\mathbb{P}^2}
\def\p3{\mathbb{P}^3}
\def\p4{\mathbb{P}^4}
\def\r{\mathbf{r}}
\def\coL{\mathcal{L}}
\def\coU{\mathcal{U}}
\def\coV{\mathcal{V}}
\def\coW{\mathcal{W}}
\def\rk{\operatorname{rk}}
\def\GL{\operatorname{GL}}
\def\Z{\mathbb{Z}}
\def\K{\mathbb{K}}
\def\R{\mathbb{R}}
\def\Q{\mathbb{Q}}
\def\N{\mathbb{N}}
\def\V{\mathbb{V}}
\def\F{\mathcal{F}}
\def\SF{\mathcal{SF}}
\def\PSF{\mathcal{PSF}}
\def\gkz{\mathcal{Q}}
\def\G{\mathcal{G}}
\def\Ls{\mathcal{L}}
\theoremstyle{plain}
\newtheorem{theorem}{Theorem}
\newtheorem{proposition}{Proposition}
\newtheorem{thm-def}{Theorem--Definition}
\newtheorem{corollary}{Corollary}
\newtheorem{conjecture}{Conjecture}
\newtheorem{lemma}{Lemma}
\newtheorem*{a-proposition}{Proposition}
\theoremstyle{remark}
\newtheorem{remark}{Remark}
\theoremstyle{definition}
\newtheorem{definition}{Definition}
\newtheorem*{step I}{Step I}
\newtheorem*{step II}{Step II}
\newtheorem*{step III}{Step III}
\newtheorem*{step IV}{Step IV}
\begin{document}


\begin{abstract}
We present two algorithms  determining all the complete and simplicial fans admitting a fixed non-degenerate set of vectors $V$ as generators of their 1-skeleton. The interplay of the two algorithms  allows us to discerning if the associated toric varieties admit a projective embedding, in principle for any values  of dimension and Picard number.
The first algorithm is slower than the second one, but it computes all complete and simplicial fans supported by $V$ and lead us to formulate a topological-combinatoric conjecture about the definition of a fan.

On the other hand, we adapt the Sturmfels' arguments on the Gr\"obner fan of toric ideals  to our complete case; we give a characterization of the Gr\"obner region and show an explicit correspondence between  Gr\"obner cones and  chambers of the secondary fan. A homogenization procedure of the toric ideal associated to $V$ allows us to employing GFAN and related software in producing our second algorithm. The latter turns out to be much faster than the former, although it can compute only the projective fans supported by $V$. We provide examples and a list of open problems. In particular we give examples of rationally parametrized families of $\Q$-factorial complete toric varieties behaving in opposite way with respect to the dimensional jump of the nef cone over a special fibre.
\end{abstract}

\keywords{Toric varieties, Gale duality, Gr\"obner fan, secondary fan, initial ideals, toric ideals}

\subjclass[2010]{14M25 \and 13P10 \and 14-04}

\maketitle

\section*{Introduction}
The main purpose of the study here presented is producing two implemented algorithms aimed to determining all the complete and simplicial fans, admitting a fixed non-degenerate set of vectors as generators of their 1-skeleton. In particular their interplay allows us to discerning if the associated toric varieties admit a projective embedding, that is ample divisors, in particular for higher values ($\geq 4$) of dimension and Picard number.

In fact, it is well known that a complete toric variety may not be projective. This cannot happen for toric varieties of dimension $\leq 2$ \cite[\S\,8, Prop.\,8.1]{MO}, but for higher dimension it has been shown by several examples, the first of which was given by M.\,Demazure \cite{Demazure}. P.\,Kleinschmidt and B.\,Sturmfels \cite{Kleinschmidt-Sturmfels} proved that, for Picard number (in the following also called the \emph{rank}) $r\leq 3$, smooth and complete toric varieties are projective in every dimension, that is they have to admit ample divisors. This result cannot be extended to higher values of the rank, as shown by a famous example given by T.~Oda \cite[p.84]{Oda}, who presented a smooth complete 3--dimensional toric variety $X$ of rank $r=4$, whose nef cone has dimension 2: therefore $X$ admits non--trivial numerically effective classes (among which the anti--canonical one) but does not admit any ample class.

\noindent When dropping the smoothness hypothesis, Kleinschmidt--Sturmfels bound does no longer hold even for $\Q$--factorial singularities: a counterexample has been given by F.\,Berchtold and J.\,Hausen \cite[Ex.\,10.2]{Berchtold-Hausen} who provided a $\Q$--factorial complete 3--dimensional toric variety $X$ of rank $r=3$ whose nef cone is 1-dimensional and generated by the anti--canonical class. This example is actually a divisorial contraction of the Oda's example, as it can be obtained by suppressing a fan generator. As the Oda's one, the Berchtold-Hausen example is still a sharp counterexample, as $\Q$-factorial complete toric varieties of rank $r\leq 2$ turn out to be projective in any dimension, as recently proved in \cite{RT-r2proj} by the authors of the present paper.

In the literature, further examples of non-projective complete toric varieties are known. In particular O.~Fujino and S.~Payne \cite{FP} provided an example of a smooth toric threefold of rank $r=5$ without any non-trivial numerically effective divisor, that is, whose nef cone is trivial, and showed that such a phenomenon cannot happen, in the smooth -dimensional case, when $r\leq 4$. Moreover an example of a $\Q$-factorial complete 3-dimensional toric variety, of rank $r=3$, whose nef cone is trivial has been recently given by the authors \cite[\S~3]{RT-r2proj}, showing that in the $\Q$-factorial setup that bound reduces to giving $r\leq 2$, when actually the involved varieties have to be projective and so admitting even ample divisors. Again, the last example turns out to be a divisorial contraction of the Fujino-Payne one. Moreover it has been obtained by slightly \emph{deforming}, in a sense better described in the final \S~\ref{ssez:deforming}, the Berchtold-Hausen example, so breaking up its symmetry in such a way that the two non-projective varieties sharing the same nef cone generated by the anti-canonical class \emph{deform}, on the one hand to a projective variety and on the other one to a still worse non-projective one, no more admitting even non-trivial nef divisors.

This fact reveals two interesting evidences.

The first one is that many known examples of complete and non-projective toric varieties are intimately related, showing that, morally, they turn out to be an \emph{ad hoc} variation of the classical Oda's example. We believe this happens because it is very difficult constructing example of this kind for higher values of dimension and rank and maintaining low values of those parameters forces us to work in a too narrow environment.

The second evidence is that we find a gap in the theory, when we try to explain how non-projective varieties arise among complete toric varieties: is it true that a \emph{general} complete toric variety is projective? What is the precise meaning of the word ``general'' in the previous question? Why \emph{slightly deforming} the secondary fan, e.g. by moving a ray of the effective cone, generated by the classes of effective divisors, non-projective varieties can appear and disappear? As stimulating examples in \S\,\ref{ssez:deforming} we present two families, over $\Q$, of complete $\Q$-factorial toric varieties, the first admitting a projective general fibre with a non-projective special fibre and the second admitting general fibre without any non trivial nef divisor and a special fibre whose anticanonical class generates the nef cone. These examples seem to be  representative of apparently opposite phenomena calling for a more general theoretic explanation we are not able to give in this moment.

Both these evidences requires a huge multitude of examples to consider and study, in particular for higher values of dimension and rank, so highlighting the need of a computer-aided approach to this kind of problems.

In the literature there are algorithms computing projective fans (i.e. regular triangulations) but also algorithm computing all kind of triangulations (see for example \cite{DeLoera1996, DeLoera2010, ImaiMasada2002} )
In this paper we propose two algorithms calculating complete $\QQ$-factorial fans over a set of vectors.

The first one, described in Section \ref{sec:cercafan},  computes projective and non projective fans. Although it is quite inefficient, it is theoretically interesting as leading us to the topological-combinatoric conjecture \ref{congettura}, about the definition of a fan. We performed a Maple implementation of this algorithm, making it compatible with packages like \texttt{Convex} by M.\,Franz \cite{convex} and \texttt{MDSpackage} by S.\,Keicher and J.\,Hausen \cite{MDSpackage}. These connections reveal to be quite useful for several applications. The first one in computing  movable and nef cones associated to the computed fans and so detecting if the associated toric variety is projective or not. The second one to study possible embedded Mori Dream spaces.

The second algorithm uses Gr\"obner bases of toric ideals, and  the fact, established by Sturmfels \cite[Prop. 8.15]{Sturmfels1996}, that the Gr\"obner fan of a toric ideal   refines the secondary fan associated with the corresponding configuration of vectors. Whilst most of the literature on this subject  deals with homogeneous toric ideals, here we focus our attention on toric ideals arising from a complete configuration; these ideals cannot be homogeneous. We describe the Gr\"obner region and translate the problem of calculating fans in a problem of linear programming, as done in \cite{SturmfelsThomas97} in the homogeneous case. It turns out a surjective map (Corollary \ref{cor:suriezione}) from a  set of initial ideals of the toric ideal and the set of simplicial projective fans having as $1$-skeleton the rays generated by all the vectors in the configuration.   This correspondence in general is not injective, but initial ideals associated to the same fan result to have the same radical.\\
The above theoretical results, mainly concentrated in \S\,\ref{sez:toricideal}, are probably the most original contribution of the research here presented, and  allow us to formulate, in Section \ref{sec:algorithm}, an algorithm which determines all projective complete simplicial fans with a given $1$-skeleton, based on the computation of the Gr\"obner fan of the associated toric ideal.
For the latter purpose, we exploit the existing software for finding the Gr\"obner fan of toric ideals such as TiGERS or CATS (incorporated in GFAN) (see \cite{HuberThomas2000, Jensen1, Gfan} ).  This software works with homogeneous ideals and in Section \ref{sect:exploiting} we explain how to adapt it to our situation by taking as input the   homogeneized  ideal  and modifying the output in a suitable way. The algorithm  produced in this way  turns out to be much more efficient than that presented in \S \ref{sec:cercafan}. Section \ref{sec:examples} is devoted to present some examples.  Section \ref{sec:problems} points out some open problems and directions for future work.

\section{Preliminaries and notation}\label{sez:preliminari}
Let $A\in\mathbf{M}(d,m;\Z)$ be a $d\times m$ integer matrix (along the paper, $d$ will be replaced by either $r$ (the Picard number) or $n$ (the dimension), depending on the situation). Then
\begin{eqnarray*}
  &\mathcal{L}_{\mathrm r}(A)\subseteq\Z^m& \text{denotes the sublattice spanned by the rows of $A$;} \\
  &\mathcal{L}_{\mathrm c}(A)\subseteq\Z^d& \text{denotes the sublattice spanned by the columns of $A$;} \\
  &\mathcal{V}_{\mathrm r}(A)\subseteq\R^m& \text{denotes the subspace spanned by the rows of $A$;} \\
   &\mathcal{V}_{\mathrm c}(A)\subseteq\R^d& \text{denotes the subspace spanned by the columns of $A$;} \\
  &A_I\,,\,A^I& \text{$\forall\,I\subseteq\{1,\ldots,m\}$ the former is the submatrix of $A$ given by}\\
  && \text{the columns indexed by $I$ and the latter is the submatrix of}\\
  && \text{$A$ whose columns are indexed by the complementary }\\
  && \text{subset $\{1,\ldots,m\}\backslash I$;} \\
\end{eqnarray*}

 We denote by
${\rm supp}(\mathbf{u})$ the support $\{i \ |\  u_i\not=0\}$ of a vector $\mathbf{u}\in\R^m$.
 For a monomial $\x
^\mathbf{u} $
we set ${\rm supp}(\x^\mathbf{u}) = {\rm supp}(\mathbf{u})$.\\

For every vector $\u\in\Z^m$ we write $\u=\u^+-\u^-$ where $\u^+,\u^-\in\N^m$ have disjoint support (we assume, here and elsewhere in the paper, that $0$ belongs to the set of natural numbers $\N$). We denote by $g_{\u}$ the binomial $\x^{\u^+}-\x^{\u^-}$.\\

For a subset $X$ of $\R^n$ the \emph{relative interior}, $\Relint(X)$, is the interior of $X$ inside the
affine span of $X$.\\

In the following $V=(\v_1,\ldots, \v_m)$ is a $n\times m$ complete CF--fan matrix (see the following Definition~\ref{def:Fmatrix}), $Q$ is a Gale dual matrix of $V$ (see the following \S~\ref{ssez:Gale}); it has order  $r\times m$, where $r+n=m$.\\

We shall denote $\langle\v_1,\ldots, \v_h\rangle$ the cone spanned by the vectors $\v_1,\ldots, \v_h$. If $A$ is a matrix, then $\langle A\rangle$ denotes the cone spanned by the columns of $A$.\\

If $M$ is a module over a ring $R$, we shall denote by $M^{\vee}$ its dual module: $M^\vee=\Hom_R(M,R)$.\\

If $\sigma$ is a cone in a real vector space $\V$, $\sigma^*$ denotes its dual cone:
$$\sigma^*=\{f\in\V^\vee\ |\ f(\y) \geq 0\hbox{ for every } \y\in\sigma\}.$$

$\K$ is a field of characteristic $0$.

\section{F--matrices}
 An $n$--dimensional $\QQ$--factorial complete toric variety $X=X(\Si)$ of rank $r$ is the toric variety defined by an $n$--dimensional \emph{simplicial} and \emph{complete} fan $\Si$ such that $|\Si(1)|=m=n+r$.\\
		In particular the rank $r$ coincides with the Picard number i.e. $r=\rk(\Pic(X))$.
		\vspace{0.5 cm}
		
		Given such a fan $\Sigma$, it gives rise to
		a matrix $V$ whose columns are    integral vectors generating the rays of the $1$-skeleton $\Sigma(1)$.

		 The matrix $V$ will be called a \emph{fan matrix} of $\Sigma$.
		 \vspace{0.5 cm}
		
		 Fan matrices motivate the following definition (see \cite{RT-LA&GD} for any further detail).
\begin{definition}\label{def:Fmatrix} A (reduced) \emph{F--matrix} is a $n\times m$ matrix  $V$ with integer entries, satisfying the conditions:
		\begin{itemize}
			\item[a)] $\rk(V)=n$;
			\item[b)] all the columns of $V$ are non zero;
			\item[c)] if ${\bf  v}$ is a column of $V$, then $V$ does not contain another column of the form $\lambda  {\bf  v}$ where $\lambda>0$ is a real number.
				\item[d)] $V$ is F--complete: the cone generated by its columns is $\RR^n$;
			\item[e)] the $\gcd$ of the elements of every columns is 1
			
		\end{itemize}
		A \emph{CF--matrix} is a F--matrix satisfying the further requirement
		\begin{itemize}
			\item[f)] ${\mathcal L}_{\mathrm c}(V)=\Z^n$.
		\end{itemize}
	\end{definition}
Let $V=(\v_1,\ldots,\v_m)$ be a $n\times m$ F--matrix.
	Let $\SF(V)$ be the set of all simplicial and complete fans whose 1-skeleton is given by all the rays generated by the columns of $V$. For any choice $\Si\in\SF(V)$ we get a $\QQ$-factorial complete toric variety $X=X(\Si)$.
	\\
	$X(\Si) $ is called a \emph{poly weighted projective space} (PWS)
	if $V$ is a CF--matrix.
	\begin{theorem}\cite[Theorem 2.2]{RT-QUOT}
		Every $\QQ$-factorial complete toric variety $X=X(\Sigma)$ admits a canonical finite abelian covering $$Y(\widehat{\Si})\twoheadrightarrow X(\Si),$$ unramified in codimension $1$, such that $Y(\widehat{\Si})$ is a $PWS$.
	\end{theorem}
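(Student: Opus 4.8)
The strategy is to leave the fan $\Sigma$ untouched and to replace the ambient lattice by a finite--index sublattice. Fix a fan matrix $V=(\v_1,\dots,\v_m)$ of $\Sigma$, which is an $n\times m$ F--matrix in the sense of Definition~\ref{def:Fmatrix}; set $N=\Z^n$ and let $\widehat N:=\mathcal{L}_{\mathrm c}(V)\subseteq N$ be the sublattice spanned by the columns of $V$, of finite index since $\rk(V)=n$. Write $\widehat\Sigma$ for the fan $\Sigma$ regarded inside $\widehat N_\R=N_\R$ --- this is literally the same collection of cones, so it is indeed a (simplicial, complete) fan for $\widehat N$ --- and put $Y(\widehat\Sigma):=X(\widehat\Sigma)$, the toric variety of $\widehat\Sigma$ with respect to $\widehat N$. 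The first thing I would check is that $V$ is a CF--matrix \emph{for the lattice $\widehat N$}: conditions (a)--(d) of Definition~\ref{def:Fmatrix} depend on $V$ only as an integer matrix and are unaffected by the change of lattice; condition (f) holds by construction; and condition (e) is inherited, because if some column satisfied $\v_i=k\w$ with $k\ge 2$ and $\w\in\widehat N\subseteq\Z^n$, then the $\gcd$ of the entries of $\v_i$ would be divisible by $k$, contradicting (e) for $V$ over $\Z^n$. Hence $\widehat\Sigma$ has a CF--matrix, so $Y(\widehat\Sigma)$ is a $\QQ$--factorial complete toric variety which is a PWS.

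\textbf{The covering.} Next I would exploit the inclusion $\iota:\widehat N\hookrightarrow N$. Since $\iota_\R=\id$, it carries every cone of $\widehat\Sigma$ onto the corresponding cone of $\Sigma$, hence induces a toric morphism $f:Y(\widehat\Sigma)\to X(\Sigma)$; as the cokernel $N/\widehat N$ is finite, $f$ is finite, and since $X(\Sigma)$ is complete and $f$ is dominant, $f$ is surjective. Over the big tori $f$ restricts to the isogeny $T_{\widehat N}\to T_N$, whose kernel is the finite diagonalizable group $\mu$ dual to $N/\widehat N$ (so $\mu\cong N/\widehat N\cong\Tors(\Cl(X))$, the last identification coming from the exact sequence defining $\Cl(X)$ via Gale duality); this $\mu$ acts on $Y(\widehat\Sigma)$ with quotient $X(\Sigma)$, exhibiting $f$ as a finite abelian Galois covering. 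To see that $f$ is unramified in codimension $1$, note that the $T$--invariant prime divisors of $Y(\widehat\Sigma)$ and of $X(\Sigma)$ are indexed by the \emph{same} rays, and by the primitivity checked above the minimal generator of each such ray is the same vector $\v_i$ in $\widehat N$ as in $N$; hence $f$ has ramification index $1$ along every invariant prime divisor, while it is \'etale over the big torus. Since the complement of the torus together with the invariant divisors has codimension $\ge 2$, this shows $f$ is \'etale --- in particular unramified --- in codimension $1$. Finally, $\widehat N=\mathcal{L}_{\mathrm c}(V)$ depends only on the set of primitive ray generators of $\Sigma$, hence only on $X(\Sigma)$; it is moreover the smallest sublattice making $V$ a CF--matrix, which is what makes $f$ the canonical (indeed minimal) covering with these properties.

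\textbf{Main obstacle.} The construction is essentially formal; the two points carrying actual content are (i) the primitivity check above --- that the minimal ray generators of $\Sigma$ do not become divisible once we pass to the sublattice $\mathcal{L}_{\mathrm c}(V)$ --- and (ii) the computation of the ramification indices of the toric morphism induced by a sublattice inclusion along the boundary divisors, which must be set up carefully in order to license the conclusion ``unramified in codimension $1$''. Everything else (finiteness and surjectivity of $f$, identification of the Galois group, \'etaleness over the torus) is standard toric bookkeeping.
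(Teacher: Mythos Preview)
The paper does not prove this theorem at all: it is quoted verbatim from \cite[Theorem~2.2]{RT-QUOT} and used as a black box. So there is no ``paper's own proof'' to compare against. That said, your proposal is correct and is precisely the construction carried out in the cited reference: pass from $N=\Z^n$ to the finite--index sublattice $\widehat N=\mathcal{L}_{\mathrm c}(V)$, keep the fan, and read off the PWS property from the fact that $V$ becomes a CF--matrix over $\widehat N$. Your handling of the two nontrivial points --- primitivity of the $\v_i$ in $\widehat N$ (hence no ramification along the torus--invariant divisors) and the identification of the Galois group with the Cartier dual of $N/\widehat N\cong\Tors(\Cl(X))$ --- is accurate, and the rest is indeed standard toric bookkeeping.
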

	The fans associated to the two varieties have the same combinatoric structure, in the sense that they involve the same sets of indices of columns in the corresponding fan matrices.

\section{The \lq\lq cercafan\rq\rq algorithm for calculating $\SF(V)$ and the pseudofan conjecture }\label{sec:cercafan}
Let $V=(\v_1,\ldots,\v_m)$ be a $n\times m$ F--matrix. Then  $\SF(V)$ can be computed by the following steps.
\begin{enumerate}
	\item Compute the set $\mathcal{M}$ of minimal $n$-dimensional  cones generated by the columns of $V$, i.e. the simplicial cones not contaning other columns of $V$ than those generating their extremal rays.
	\item Compute the set $\mathcal{F}$ of facets (that is, $(n-1)$-dimensional faces) of cones in $\mathcal{M}$ and for each $f\in\mathcal{F}$ a normal vector $\n_f$ (the \emph{positive side} of $f$).
	\item for every $f\in\mathcal{F}$ compute
\begin{align*}
\mathcal{M}_f^+&= \{\sigma \in \mathcal{M} \ |\ f\hbox{ is a facet of $\sigma$ and $\sigma$  lies on the positive side of $f$}\}\\
\mathcal{M}_f^-&= \{\sigma \in \mathcal{M} \ |\ f \hbox{ is a facet of $\sigma$ and $\sigma$  lies on the negative side of $f$}\}
\end{align*}
 \item Starting from the collection
$$\{(f, \mathcal{M}_f^+,\mathcal{M}_f^-) \ | \ f\in\mathcal{F}\}$$
eliminate in all possible ways cones from $\mathcal{M}_f^+$ and $\mathcal{M}_f^-$ until for every $f\in\mathcal{F}$\\
either $|\mathcal{M}_f^+|=|\mathcal{M}_f^-|=0$ or $|\mathcal{M}_f^+|=|\mathcal{M}_f^-|=1$.
\item Put $\mathcal{C}=\bigcup_f \mathcal{M}_f^+\cup\bigcup_f \mathcal{M}_f^-$.
\item Verify that $\buildrel\circ\over\sigma\cap \buildrel\circ\over\tau=\emptyset$ for every $\sigma,\tau\in \mathcal{C}$; in this case $\mathcal{C}=\Sigma(n)$ for a complete simplicial fan $\Sigma$.
	\end{enumerate}

\begin{conjecture} Step (6) is unnecessary. \end{conjecture}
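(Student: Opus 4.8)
\smallskip
\noindent\emph{Proof proposal.} Assume the output $\mathcal{C}=\bigcup_{f}\mathcal{M}_f^+\cup\bigcup_{f}\mathcal{M}_f^-$ of Step (5) is non-empty (otherwise Step (6) is vacuous), and that the eliminations of Step (4) are performed consistently, so that for every $f\in\mathcal{F}$ the sets $\mathcal{M}_f^{\pm}$ consist of the cones of $\mathcal{C}$ having $f$ as a facet and lying on the corresponding side of $f$, with $|\mathcal{M}_f^+|=|\mathcal{M}_f^-|\in\{0,1\}$. Since every facet of a cone of $\mathcal{C}$ belongs to $\mathcal{F}$, this forces each such facet to be a facet of exactly two cones of $\mathcal{C}$, lying on opposite sides of it; the goal is to promote this pseudo--manifold property to a genuine fan structure. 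First I would introduce the complete fan $\mathcal{G}$ induced by the arrangement of the supporting hyperplanes of the facets in $\mathcal{F}$, so that every $\sigma\in\mathcal{M}$ is a union of maximal cones of $\mathcal{G}$, and define $\mu\colon\mathcal{G}(n)\to\Z_{\ge0}$ by $\mu(C)=\#\{\sigma\in\mathcal{C}\mid C\subseteq\sigma\}$. I would then show that $\mu$ is constant: if $C,C'$ are adjacent chambers sharing a facet lying in a hyperplane $H$, and $p$ is a point in the relative interior of that facet lying on no other hyperplane of the arrangement, then a cone $\sigma\in\mathcal{C}$ containing $p$ either contains it in its interior (and then contains both $C$ and $C'$) or meets $H$ in a facet $f$ with $p\in\Relint f$ (and then, coinciding near $p$ with the half-space bounded by $H$ on its side, contains exactly one of $C,C'$), while cones missing $p$ contain neither; grouping the resulting $\pm1$ contributions by the facets $f\subseteq H$ through $p$ yields
\[
\mu(C')-\mu(C)=\sum_{f}\varepsilon_f\bigl(|\mathcal{M}_f^+|-|\mathcal{M}_f^-|\bigr)=0,\qquad \varepsilon_f\in\{\pm1\}.
\]
As $\mathcal{G}(n)$ is connected through adjacencies, $\mu\equiv k\ge1$; hence $\bigcup_{\sigma\in\mathcal{C}}\sigma=\RR^n$ (so completeness is automatic and Step (6) is the only remaining point), and an elementary chamber argument identifies the disjointness of the interiors of the cones of $\mathcal{C}$ with the equality $k=1$. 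Everything thus reduces to proving $k=1$.

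\smallskip
For this I would glue the cones of $\mathcal{C}$ along their shared facets --- the gluing being unambiguous because two distinct simplicial cones cannot share two facets (they would otherwise have the same $n$ extremal rays) --- to form an abstract polyhedral complex $\widehat{\Delta}$, which is the cone over the finite $(n-1)$--complex $L=\widehat{\Delta}\cap S^{n-1}$; the tautological realization $r\colon\widehat{\Delta}\to\RR^n$ is linear on each cone, surjective by the previous step, and proper. If one proves that $\widehat{\Delta}$ is a topological manifold --- equivalently, that the link in $L$ of every face is a sphere --- then $r$ is a proper local homeomorphism, hence a covering map of $\RR^n$; as $\RR^n$ is simply connected, $r$ restricts to a homeomorphism on each connected component of $\widehat{\Delta}$, so every facet--connected component of $\mathcal{C}$ is the set of maximal cones of a complete simplicial fan and $k$ equals the number of components. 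Thus the conjecture becomes equivalent to: $(\mathrm i)$ the links in $L$ are spheres, and $(\mathrm{ii})$ $\widehat{\Delta}$ is connected (the cases $n\le2$ being elementary in any event).

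\smallskip
The natural attack on $(\mathrm i)$ and $(\mathrm{ii})$ is induction on $n$: the link of a ray $\langle\v_i\rangle$ extremal in some $\sigma\in\mathcal{C}$, projected modulo $\RR\v_i$, is a finite family of $(n-1)$--dimensional simplicial cones still satisfying the ``two cones on opposite sides of each facet'' condition, so by the inductive hypothesis it should be a complete fan, making that link a sphere and controlling the local structure, and $(\mathrm{ii})$ would follow from the same induction. The obstacle --- which I expect to be the crux --- is that the facet condition by itself is \emph{not} enough: already in $\RR^2$, for the six rays $\u_1,\dots,\u_6$ at the angles $j\pi/3$, the six $120^\circ$ cones $\langle\u_1,\u_3\rangle,\langle\u_3,\u_5\rangle,\langle\u_5,\u_1\rangle$ and $\langle\u_2,\u_4\rangle,\langle\u_4,\u_6\rangle,\langle\u_6,\u_2\rangle$ satisfy it but cover $\RR^2$ twice. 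This configuration is excluded from the algorithm precisely because those cones are not minimal --- each contains a further $\u_j$ --- so any proof of the conjecture \emph{must} exploit $\mathcal{C}\subseteq\mathcal{M}$; but minimality of $\sigma$ in $\RR^n$ is not evidently inherited by the projected link, since a column $\v_j\notin\sigma$ with $\v_j\in\sigma+\RR\v_i$ produces $\pi_{\v_i}(\v_j)\in\pi_{\v_i}(\sigma)$. Making the minimality hypothesis descend along the stars of rays --- or, failing that, bounding the multiplicity $k$ directly in terms of the combinatorics of $\mathcal{M}$ --- is, I believe, where the real difficulty of the conjecture lies.
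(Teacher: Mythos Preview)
The paper contains no proof of this statement: it is stated there as an open conjecture, with the authors remarking that it is trivial in dimension~$2$, that they could not prove it in dimension~$\geq 3$, and that it is supported by extensive computation. There is therefore nothing in the paper to compare your argument against.

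Your proposal is likewise not a proof, and you are candid about this in the last paragraph. What you have produced is a genuine partial reduction that goes well beyond anything the paper attempts. The constant--multiplicity argument via the hyperplane arrangement $\mathcal{G}$ is correct and cleanly isolates the problem as the equality $k=1$; your $\RR^2$ double--cover example with the six rays at angles $j\pi/3$ is exactly the right illustration that the facet condition alone is insufficient and that the minimality hypothesis $\mathcal{C}\subseteq\mathcal{M}$ must be used somewhere. Your identification of the obstruction --- that minimality of a cone does not obviously descend to its projected link, blocking the natural induction on $n$ --- is precisely the point where the difficulty lies.

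One small caution on the covering--space step: even granting that all links in $L$ are spheres (so that $\widehat{\Delta}$ is a manifold), the assertion that $r$ is a \emph{local homeomorphism} near a lower--dimensional face still needs the cones around that face to fit together injectively under $r$, i.e.\ the link must not merely be an abstract sphere but must map homeomorphically to a round sphere in $\RR^n$. This is essentially the same inductive statement you are already trying to establish, so it does not introduce a new obstruction, but it does mean that ``links are spheres'' and ``$r$ is a local homeomorphism'' are not quite separable hypotheses. In any case, the conjecture remains open and your analysis correctly locates where a proof would have to do real work.
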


More precisely we can define a \emph{pseudofan} as a collection
$$\mathcal{C}=\{\sigma_1,\ldots,\sigma_t\}$$ of simplicial cones on the columns of $V$ satisfying:
\begin{itemize}
	\item $\mathcal{C}\subseteq\mathcal{M}$ i.e. every $\sigma_i$ is a minimal maximal cone.
	\item The set of vertices of cones in $\mathcal{C}$ is the whole set of columns of $V$;
	\item  Every facet $f$ of some $\sigma_i\in\mathcal{C}$ is shared by  unique other $\sigma_j\in\mathcal{C}$ lying on the opposite side of $f$.
	\item (maybe) $\mathcal{C}$ is minimal with these properties.
\end{itemize}

Then the pseudofan conjecture can be stated as:
\begin{conjecture}\label{congettura} Every pseudofan is a fan in $\SF(V)$.
	\end{conjecture}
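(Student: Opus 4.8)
The plan is to attach to a pseudofan $\mathcal{C}$ a covering--multiplicity function on the unit sphere $S:=S^{n-1}\subset\RR^n$, to prove it is constant, and then to force that constant to equal $1$; completeness, disjointness of interiors and the face-to-face property all follow from that. For $\sigma\in\mathcal{C}$ put $\Delta_\sigma:=\sigma\cap S$, and let $U\subseteq S$ be the complement of the union of the (finitely many, $(n-2)$-dimensional) relatively open facets of the $\Delta_\sigma$; then $U$ is open and dense, and on $U$ one has $p\in\Delta_\sigma\iff p\in\Relint(\Delta_\sigma)$. Define $\mu\colon U\to\ZZ_{\geq0}$ by $\mu(p)=\#\{\sigma\in\mathcal{C}\mid p\in\Relint(\Delta_\sigma)\}$. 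To prove $\mu$ is constant, I would join two points $p_0,p_1\in U$ by a generic path in $S$: it meets the union of the facets transversally in finitely many points, each a point $q$ lying in the relative interior of a facet $f$ and generic in the hyperplane $H:=\mathrm{aff}(f)$, and it avoids all faces of codimension at least $2$. Near such a $q$ the cones of $\mathcal{C}$ that meet a small ball split into two families: those $\sigma$ with $q\in\Relint(\Delta_\sigma)$, which contribute to $\mu$ on both sides of $H$; and those $\sigma$ having a facet $f'$ with $\mathrm{aff}(f')=H$ and $q\in\Relint(f')$, which by the matching condition come in pairs lying on opposite sides of $H$. Hence $\mu$ does not change across $q$, so $\mu\equiv k$ for some constant $k$.

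Then $k\geq1$: $\mathcal{C}$ is non-empty because its vertex set is the non-empty set of columns of $V$, and for $\sigma\in\mathcal{C}$ the non-empty open set $\Relint(\Delta_\sigma)$ meets the dense set $U$, so $\mu\geq1$ somewhere. The substance of the conjecture is the opposite inequality $k\leq1$.

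For $k\leq1$ my plan is induction on $n$ via the star of a ray. Fix a column $\v_i$, set $\rho=\langle\v_i\rangle$, and let $\pi\colon\RR^n\to\RR^n/\langle\v_i\rangle\cong\RR^{n-1}$ be the projection. A point of $U$ close enough to $\Relint(\rho)$ lies in $\sigma\in\mathcal{C}$ precisely when $\rho$ is a face of $\sigma$ and the projected point lies in the $(n-1)$-dimensional simplicial cone $\pi(\sigma)$; consequently the collection $\mathcal{C}/\rho:=\{\pi(\sigma)\mid\rho\preceq\sigma\in\mathcal{C}\}$ covers $\RR^{n-1}$ with the same constant multiplicity $k$. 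The matching condition passes to $\mathcal{C}/\rho$ — every facet of $\pi(\sigma)$ is the image of a facet of $\sigma$ containing $\rho$, whose matching partner again contains $\rho$ — and, since $\mathcal{C}/\rho$ covers $\RR^{n-1}$, the cone generated by its rays is all of $\RR^{n-1}$. If one can check that $\mathcal{C}/\rho$ is again a pseudofan, then by the inductive hypothesis it is a fan, hence covers with multiplicity $1$, and $k=1$. Verifying the pseudofan axioms for $\mathcal{C}/\rho$ — above all minimality, i.e. that no $\pi(\sigma)$ contains a projected column other than its own generators — is the main obstacle: a column can project into $\pi(\sigma)$ ``from below'', with a negative coefficient on $\v_i$, and ruling this out (or arranging the induction with the correct set of projected rays) is precisely what keeps the statement at the level of a conjecture. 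A complementary route is to try to show directly that if $k\geq2$ one may delete cones from $\mathcal{C}$ and still have a pseudofan, contradicting the assumed minimality of $\mathcal{C}$.

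Finally, once $k=1$: the sets $\Relint(\Delta_\sigma)$, $\sigma\in\mathcal{C}$, are pairwise disjoint (a common point would give $\mu\geq2$ there) and their union is dense, so $\bigcup_{\sigma\in\mathcal{C}}\sigma=\RR^n$; a short convexity argument, using the matching condition to handle the codimension-one incidences, then shows that $\sigma\cap\tau$ is a common face of $\sigma$ and $\tau$ for all $\sigma,\tau\in\mathcal{C}$. Therefore $\Sigma:=\{g\mid g\text{ is a face of some }\sigma\in\mathcal{C}\}$ is a complete simplicial fan whose $1$-skeleton is the set of rays through the vertices of the cones of $\mathcal{C}$, i.e. through all the columns of $V$; that is, $\Sigma\in\SF(V)$, as claimed.
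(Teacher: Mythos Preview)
The statement is a \emph{conjecture} in the paper, not a theorem; the authors say it is trivial for $n=2$ and supported by many computed examples, but that they were unable to prove it for $n\geq 3$. There is no proof in the paper to compare against.

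Your outline is the natural approach and you are candid about where it breaks. The constancy of the covering multiplicity $\mu$ (your steps~1--2) does follow from the facet--matching axiom, and $\mu\geq 1$ is immediate. The whole difficulty is $\mu\leq 1$, which is exactly the disjointness of interiors that step~(6) of the algorithm checks by hand and that the conjecture asserts is automatic. Your inductive scheme via the star $\mathcal{C}/\rho$ stalls for the reason you yourself name: the minimal--cone axiom need not survive the projection $\pi$, since a column $\v_j$ with a negative $\v_i$--component can land in the interior of some $\pi(\sigma)$, and it is not even clear what the ambient $F$--matrix for the star should be, so the induction hypothesis is unavailable. The fallback you mention---delete cones when $k\geq 2$ to contradict minimality of $\mathcal{C}$---is likewise only a hope; note moreover that the paper hedges the minimality clause of the pseudofan definition with an explicit ``(maybe)'', so it is not clear one is even allowed to invoke it. In short, your write--up is a fair account of \emph{why} the statement remains a conjecture, but it is not a proof.
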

    The conjecture is trivial in dimension 2. Although we would not able to prove it in dimension $\geq 3$, it is supported by a huge number of examples up to dimension 5 and rank 4.

\section{Gale duality and secondary fan}
Although its theoretical interest, the \lq\lq cercafan \rq\rq algorithm is not very efficient and is in practice unusable as the size and number of vectors grow. The following of this paper will be devoted to present a second algorithm, based on the  theory of Gr\"obner bases of toric ideals, computing the projective fans in $\SF(V)$, for a given F--matrix $V$. A fundamental tool for this construction is  Gale duality that we briefly recall in this section.
\subsection{The Gale dual matrix of $V$}\label{ssez:Gale}

Let $V$ be a $n\times m$ F--matrix. If we think $V$ as a linear application from $\Z^{m}$ to $\Z^n$ then $\ker(V)$ is a lattice in $\Z^{m}$, of rank $r:=m-n$ and without cotorsion. \\
We shall denote $Q={\mathcal G}(V)$ the \emph{Gale dual matrix of $V$}: an integral $r\times (n+ r)$ matrix $Q$ whose rows are a $\ZZ$-basis of $\ker(V)$;
the matrix $Q$ is well-defined up to left multiplication by $\GL_r(\Z)$ and can be characterized by the following universal property  \cite[\S 3.1]{RT-LA&GD}:
\begin{center} {\it if $A\in \mathbf{M}_r(\Z)$ is such that $A\cdot V^T=0$ then $A=\alpha\cdot Q$ for some matrix $\alpha\in\mathbf{M}_r(\Z)$.}\end{center}

F--completeness implies that the span of the rows of $Q$ contain a strictly positive vector, so that \emph{$Q$ can be chosen non-negative}.

Notice that $\G(V)=\G(\widehat{V})$ where $\widehat{V}$ is the CF--matrix associated to the $1$-covering.

\subsection{Gale duality}

Recall notation introduced in \S~\ref{sez:preliminari}.

Submatrices of $Q$ and $V$ correspond each other via the natural isomorphism

$$\Z^{m-k}/\Ls_c(Q^I)\isom \Ls_c(V)/\Ls_c(V_I),\quad\quad |I|=k.$$

\subsection{Bunches and the secondary fan}
The Gale dual analogue of a fan is a \emph{bunch of cones} (see \cite{Berchtold-Hausen}). \\
In our situation we can identify a bunch of cones with  a family $\Omega$ of simplicial cones on the columns of $Q$ satisfying
$$\buildrel\circ\over\sigma \cap \buildrel\circ\over\tau\not= \emptyset\hbox{ for every } \sigma,\tau\in\Omega.$$
A bunch $\Omega$ is \emph{projective} if $\bigcap_{\sigma\in \Omega}\sigma $ (the chamber) is full-dimensional.\\
The set of chambers gives rise to the $r$-skeleton of a polyhedral fan (the \emph{secondary fan}) with support $\langle Q\rangle$.\\
If  $I\subseteq\{1,\ldots,m\}$ and $|I|=r$ then the correspondence
$\langle Q_I \rangle \mapsto \langle V^I\rangle $ induces a bijection between bunches and fans and in particular between
\begin{equation}\label{eq:fanandbunches} \xymatrix{\SF(V)\ar@{<->}[r]^-{1:1}& \mathcal{B}(Q)}\end{equation}
where
$\mathcal{B}(Q)=\{\Omega \ |\ \hbox{ for  $i=1,\ldots, m$ there is $\sigma\in\Omega$  such that $\sigma\subseteq \langle Q^{\{i\}} \rangle$ }\}$.
It induces a bijection
$$\xymatrix{\PSF(V)\ar@{<->}[r]^-{1:1}& \mathcal{PB}(Q)}$$
between projective fans and associated bunches.

\begin{remark}
We implemented the cercafan algorithm in Maple, making it compatible with  packages like \texttt{Convex} \cite{convex} and \texttt{MDSpackage} \cite{MDSpackage}. Then one can quickly determine, for each computed fan, if it is projective, by checking if the intersection of all the cones in the bunch (i.e. the nef cone) is full dimensional or not. Moreover both nef and movable cones are quickly computable and one can also obtain useful information about possible embedded Mori Dream spaces.
\end{remark}

\section{The toric ideal}\label{sez:toricideal}
In this section we shall associate to every F--matrix $V$ a toric ideal in a suitable polynomial ring, and investigate the properties of its  Gr\"obner fan, which provides the structure of the initial ideals.  Whilst most of the literature on this subject  deals with homogeneous toric ideals, ideals associated to F--matrices  cannot be homogeneous, due to the F--completeness requirement in  Definition \ref{def:Fmatrix} d).
 We shall describe the Gr\"obner region and translate the problem of calculating fans in a problem of linear programming, as done in \cite{SturmfelsThomas97} in the homogeneous case.\\
Let $\K$ be a field of characteristic $0$ and
 $V=(\v_1,\ldots,\v_m)$ be an  F--matrix; it defines a ring homomorphism:

\begin{eqnarray*}
	\pi:\K[x_1,\ldots, x_m ]& \longrightarrow &\K[t_1^{\pm 1},\ldots t_n^{\pm1}]\\
	x_j&\longmapsto & \mathbf{t}^{\mathbf{v}_j}
\end{eqnarray*}
The \emph{toric ideal} of $V$, denoted as $I_V$, is the kernel of the map $\pi$.\\
The following facts on $I_V$ (see \cite{Sturmfels1996} ) are well-known
	\begin{itemize}
		\item $I_V=I_{\widehat{V}}$.
		\item $I_V$ is generated by binomials $g_{\u}=\x^{\u^+}-\x^{\u^-}$, with $\u=\u^+-\u^-\in\Ls_r(Q)$, and $\u^+,\u^-\in\N^m$ have disjoint support.
		\item For every term order $\preceq$ the reduced Gr\"obner basis of $I_V$ with respect to $\preceq$ consists of a finite set of binomials of the form $g_\u$.
		
		\end{itemize}
		
		\begin{remark}
			Completeness is equivalent to the fact that for $i=1,\ldots, m$ there is a binomial $\x^{\u^+}-1$ in $I_V$ such that $i\in{\rm supp}(\u^+)$.
			In particular every Gr\"obner basis for $I_V$ contains a binomial $\x^{\u}-1$ where $\u$ is a non negative vector in $\Ls_r(Q)$.
		This fact has been applied in order to compute non negative vectors in lattices (see \cite{PisonCasares2004}).
		\end{remark}
		 For every $\u\in \N^m$, the \emph{fiber} $\mathcal{F}(\u)$ of $\u$ is defined as
		$$\mathcal{F}(\u)=(\mathcal{L}_{\mathrm r}(Q)+\u)\cap \N^m.$$\\
		It is noteworthy to remark that fibers are infinite sets in the complete case.\\

         We shall make use of the following technical result:
 \begin{proposition}\label{prop:FiberDecomposition} Let $f=\sum_{i\in I}\lambda_i\x^{\u_i}\in \K[\x] $ be a polynomial and write $f=f_1+\cdots +f_k$ where each summand $f_i$ is the sum of all monomials of $f$ whose exponent vectors lie in the same fiber. Then $f\in I_V$ if and only if $f_i\in I_V$ for $i=1,\ldots, k$.
 \end{proposition}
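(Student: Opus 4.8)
The plan is to use the characterization of $I_V$ as a binomial ideal together with the grading of $\K[\x]$ by the group $\ZZ^m/\Ls_{\mathrm r}(Q)$. First I would observe that the fibers of vectors in $\N^m$ are exactly the intersections with $\N^m$ of the cosets of the subgroup $\Ls_{\mathrm r}(Q) = \ker(\pi)\cap\ZZ^m$ inside $\ZZ^m$ (note $\Ls_{\mathrm r}(Q)=\ker V$ acting on $\ZZ^m$, which is precisely the kernel of the monomial map $\x^{\u}\mapsto \t^{V\u}$). Equivalently, two monomials $\x^{\u}$ and $\x^{\w}$ lie in the same fiber if and only if $\pi(\x^{\u})=\pi(\x^{\w})$, i.e.\ they have the same image under $\pi$. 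Thus the decomposition $f=f_1+\cdots+f_k$ is exactly the decomposition of $f$ into its homogeneous components with respect to the $\left(\ZZ^m/\ker V\right)$-grading on $\K[\x]$ under which $\deg(\x^{\u})$ is the class of $V\u$.

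**Key steps.** The implication ``$f_i\in I_V$ for all $i$ $\Rightarrow$ $f\in I_V$'' is immediate, since $I_V$ is an ideal and $f=\sum f_i$. For the converse, I would argue as follows. The map $\pi$ is itself graded: give $\K[t_1^{\pm1},\ldots,t_n^{\pm1}]$ the $\ZZ^n$-grading by $\deg(\t^{\aa})=\aa$, and pull this back along $V$ to a $\left(\ZZ^m/\ker V\right)$-grading refining nothing is lost because $V$ induces an injection $\ZZ^m/\ker V\hookrightarrow\ZZ^n$. Then $\pi(\x^{\u})=\t^{V\u}$ shows $\pi$ sends a monomial of multidegree $\delta$ (the class of $V\u$) to a Laurent monomial of degree equal to the image of $\delta$ in $\ZZ^n$; since distinct fibers give distinct classes $\delta$, they map to distinct degrees in $\ZZ^n$. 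Now if $f\in I_V$, then $0=\pi(f)=\sum_{i=1}^k\pi(f_i)$, and the summands $\pi(f_i)$ lie in pairwise distinct graded pieces of $\K[t_1^{\pm1},\ldots,t_n^{\pm1}]$; hence each $\pi(f_i)=0$, i.e.\ $f_i\in I_V$.

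**Main obstacle.** There is no deep obstacle; the only point requiring a little care is verifying that monomials in the same $\N^m$-fiber are precisely those with the same image under $\pi$, and that monomials in \emph{different} fibers genuinely land in different graded components of the Laurent ring — this is where the injectivity of the induced map $\ZZ^m/\ker V\hookrightarrow\ZZ^n$ (equivalently, that $\Ls_{\mathrm r}(Q)=\ker V$ is saturated in $\ZZ^m$, already noted before Definition~\ref{def:Fmatrix}'s surroundings) is used. Once that bookkeeping is in place, the proposition is just the statement that $I_V$ is a homogeneous ideal for the $\left(\ZZ^m/\ker V\right)$-grading, which follows from its being generated by the binomials $g_{\u}$ with $\u\in\Ls_{\mathrm r}(Q)$ (each such $g_{\u}=\x^{\u^+}-\x^{\u^-}$ is homogeneous, since $V\u^+ = V\u^- $). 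I would present the argument in the ``$\pi(f_i)$ live in independent degrees'' form rather than invoking abstract generalities, as it is short and self-contained.
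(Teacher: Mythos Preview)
Your proof is correct and takes a genuinely different route from the paper's. The paper argues on the \emph{generators} side: since $I_V$ is generated by the binomials $g_{\u}$ with $\u\in\Ls_{\mathrm r}(Q)$, any $f\in I_V$ can be written as a finite sum $\sum_j \mu_j\,\x^{\v_j}g_{\u_j}$ of monomial multiples of such binomials; each summand has both its terms in a single fiber, so regrouping by fibers exhibits every $f_i$ as a sum of elements of $I_V$. You instead argue on the \emph{target} side: since $\pi(\x^{\u})=\t^{V\u}$, two monomials lie in the same fiber iff they have the same image under $\pi$, so $\pi(f_1),\ldots,\pi(f_k)$ are supported on pairwise disjoint sets of Laurent monomials and $\pi(f)=0$ forces each $\pi(f_i)=0$. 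Both are standard ways of showing that $I_V$ is homogeneous for the $\Z^m/\ker V$-grading; your approach is a bit more self-contained in that it does not invoke the binomial generation of $I_V$, while the paper's approach makes the membership $f_i\in I_V$ explicit rather than passing through $\pi$. One small remark: the injectivity of $\Z^m/\ker V\hookrightarrow\Z^n$ that you flag as the ``main obstacle'' is automatic from the first isomorphism theorem and does not require any saturation hypothesis; all you really use is that distinct vectors $V\u\neq V\u'$ give distinct Laurent monomials $\t^{V\u}\neq\t^{V\u'}$, which is immediate.
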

 \begin{proof} Obviously, $f\in I_V$ if every $f_i$ lies in $I_V$. Conversely, suppose that $f\in I_V$. Then we can write $f=\sum_{j\in J}\mu_jg_{\u_j}=\sum_{j\in J}\mu_j (\x^{\u_j^+}-\x^{\u_j^-})$ where $\mathcal{F}(\u_j^+)=\mathcal{F}(\u_j^-)$. Then each $f_i$ is in turn a sum of some summands of the form  $\mu_j g_{\u_j}$, so that $f_i\in I_V$.
 \end{proof}
			For $\bb\in\R^n$ we define
			$$P_\bb=\{\u\in \R^m\ |\ V\u=\bb, \u\geq \mathbf{0}\}.$$
			(In the above formula, both $\mathbf{u}$ and $\mathbf{b}$ are thought of as \lq\lq column\rq\rq vectors, in order for matrix multiplication to be defined).
			By the completeness of $V$ we know that for every $\bb$, $P_\bb$ is a strictly convex polyhedron of dimension $r$. Moreover if $\bb\in\Z^n$ then  $P_\bb\cap\Z^m =\F(\u)$ for every $\u\in\N^m$ such that $V\u=\bb$. \begin{remark}
  Notice that many notation introduced in the present section depend implicitly on the choice of a fixed F-matrix $V$, as, for instance, for the fiber $\mathcal{F}$ and the polyhedron $P_\mathbf{b}$ and the following cone $\mathcal{W}$. Anyway, we prefer to keep a lighter notation without explicitly expressing $V$.
\end{remark}
		Let $\w\in\R^m$ and consider the linear map
		\begin{eqnarray*}
			\varphi_\w: \R^m_{\geq 0} &\longrightarrow &\R\\
			\u &\longmapsto & \w^T\u
		\end{eqnarray*}
	
	For $\bb\in \R^n$ put
	$$\mathcal{W}_\bb=\{\w\in \R^m\ |\ \varphi_\w \hbox{ is lower bounded on } P_\bb\}$$
	\begin{proposition}\label{prop:Wbnondipendedab} $\mathcal{W}_\bb$ does not depend on $\bb\in\RR^n$. \end{proposition}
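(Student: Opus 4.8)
The plan is to show that $\mathcal{W}_\bb = \mathcal{W}_{\bb'}$ for any two $\bb, \bb' \in \RR^n$ by analyzing the structure of the polyhedra $P_\bb$. The key observation is that all the polyhedra $P_\bb$ share the same \emph{recession cone}, namely
$$C = \{\u \in \RR^m \mid V\u = \0,\ \u \geq \0\} = \ker(V) \cap \RR^m_{\geq 0},$$
which is precisely the recession cone of $P_\bb = \{\u \mid V\u = \bb,\ \u \geq \0\}$ for every $\bb$ (the homogeneous version of the defining linear system). This is independent of $\bb$, and since $V$ is F--complete, each $P_\bb$ is a nonempty strictly convex polyhedron of dimension $r$, as already recorded in the excerpt.

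The main step is then the following standard fact from linear programming / polyhedral geometry: a linear functional $\varphi_\w(\u) = \w^T\u$ is bounded below on a nonempty polyhedron $P$ if and only if $\w^T\u \geq 0$ for every $\u$ in the recession cone of $P$. First I would prove this: if $\w^T\u_0 < 0$ for some $\u_0$ in the recession cone, then picking any point $\p \in P$, the ray $\p + s\u_0$ ($s \geq 0$) lies in $P$ and $\varphi_\w$ tends to $-\infty$ along it, so $\varphi_\w$ is not lower bounded; conversely, if $\w^T\u \geq 0$ on the recession cone, write (by the Minkowski--Weyl / decomposition theorem) every $\u \in P$ as $\u = \sum_i \mu_i \p_i + \c$ with $\mu_i \geq 0$, $\sum_i \mu_i = 1$, the $\p_i$ the finitely many vertices of $P$, and $\c$ in the recession cone, so that $\varphi_\w(\u) = \sum_i \mu_i \w^T\p_i + \w^T\c \geq \min_i \w^T\p_i$, giving a uniform lower bound. (Here one uses that $P_\bb$ is strictly convex, hence actually has vertices; alternatively one invokes directly that a polyhedron that is pointed is the sum of the convex hull of its vertices and its recession cone.)

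Combining these two observations finishes the proof: for each $\bb$,
$$\mathcal{W}_\bb = \{\w \in \RR^m \mid \w^T\u \geq 0 \text{ for all } \u \in C\} = C^*,$$
the dual cone of $C = \ker(V)\cap\RR^m_{\geq 0}$, which visibly does not depend on $\bb$. I would state this identification explicitly, since it will presumably be reused: $\mathcal{W} := \mathcal{W}_\bb$ equals the dual of $\ker(V) \cap \RR^m_{\geq 0}$.

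I do not expect a serious obstacle here; the only point requiring a little care is making sure that the decomposition-into-vertices-plus-recession-cone argument is legitimately applied, i.e. that $P_\bb$ is pointed (line-free), which follows from strict convexity as already asserted in the excerpt, and that $P_\bb$ is nonempty, which follows from F--completeness of $V$ (every $\bb \in \RR^n$ lies in $\langle V \rangle$, so $V\u = \bb$ has a solution with $\u \geq \0$). Everything else is the textbook characterization of lower-boundedness of a linear functional on a polyhedron in terms of its recession cone.
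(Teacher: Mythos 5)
Your proof is correct, but it follows a genuinely different route from the paper's. The paper gives a short, direct argument: starting from $\w\in\mathcal{W}_{\bb}$ with lower bound $K$ on $P_{\bb}$, it picks (by F--completeness) some $\x_0\in\RR^m_{\geq 0}$ with $V\x_0=\bb-\bb'$, observes that for $\x\in P_{\bb'}$ the shifted point $\x+\x_0$ lies in $P_{\bb}$, and concludes $\varphi_\w(\x)=\varphi_\w(\x+\x_0)-\w^T\x_0\geq K-\w^T\x_0$. You instead invoke the polyhedral characterization of lower-boundedness in terms of the recession cone $C=\ker(V)\cap\RR^m_{\geq 0}$, identifying $\mathcal{W}_{\bb}=C^*$ outright. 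Both are sound (your appeal to the vertex-plus-recession-cone decomposition is legitimate, since $P_{\bb}\subseteq\RR^m_{\geq 0}$ is automatically pointed, and nonempty by F--completeness, exactly as you note). The trade-off: the paper's shift argument is shorter and needs no Minkowski--Weyl machinery, while your approach does more work up front but simultaneously establishes the identity $\mathcal{W}=\mathcal{U}^*$, which the paper has to prove separately as Proposition~\ref{prop:dualityUW}. If one were rewriting this section, your single-lemma approach would merge the two propositions cleanly.
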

	\begin{proof}
		Let $\w\in \mathcal{W}_\bb$ and suppose $\varphi_\w(\x)\ge K\in \RR$, for any $\x\in P_\bb$.
		Let $\bb'\in\R^n$ and choose by completeness $\x_0\in \R^{m}_{\geq 0}$ such that $V\x_0=\bb-\bb'$. Let $\x\in P_{\bb'}$ then
		$$\varphi_\w(\x)=\w^T(\x+\x_0-\x_0)=\w^T(\x+\x_0)-\w^T\x_0\geq K-\w^T\x_0$$
		so that $\w\in \mathcal{W}_{\bb'}$.
	\end{proof}
		Let
\begin{equation}\label{eq:U}\mathcal{U}=P_\mathbf{0}=\ker(V)\cap \R^m_{\geq 0}=\mathcal{V}_{\mathrm r}(Q) \cap \R^m_{\geq 0}. \end{equation}

    By the previous proposition, for every $\bb\in \R^n$ we have $\mathcal{W}_\bb=\mathcal{W}$ where
	\begin{equation}\label{eq:defW}
		\mathcal{W} = \mathcal{W}_{\mathbf{0}}=
	 \{\w\ |\ \varphi_\w \hbox{ is lower bounded on }\mathcal{U}\}
	\end{equation}

	\begin{proposition}
		$\mathcal{W}$ is a polyhedral convex cone, containing $\R^m_{\geq 0}$.
	\end{proposition}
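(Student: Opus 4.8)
The plan is to identify $\mathcal{W}$ with the dual cone of $\mathcal{U}$ and then invoke the Minkowski--Weyl theorem.

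First I would note that $\mathcal{U}=\ker(V)\cap\R^m_{\geq 0}$ is a cone, i.e. stable under multiplication by non-negative scalars. Hence, for $\w\in\R^m$, the functional $\varphi_\w$ is lower bounded on $\mathcal{U}$ if and only if $\varphi_\w(\u)\geq 0$ for every $\u\in\mathcal{U}$. Indeed, if $\varphi_\w(\u_0)<0$ for some $\u_0\in\mathcal{U}$, then $\varphi_\w(t\u_0)=t\,\varphi_\w(\u_0)\to-\infty$ as $t\to+\infty$, while $t\u_0\in\mathcal{U}$ for all $t\geq 0$, contradicting lower boundedness; conversely, if $\varphi_\w\geq 0$ on $\mathcal{U}$ then $\varphi_\w$ is bounded below by $0$ there (as $\mathbf{0}\in\mathcal{U}$). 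Therefore
$$\mathcal{W}=\{\w\in\R^m\ |\ \w^T\u\geq 0\ \hbox{ for every }\ \u\in\mathcal{U}\}=\mathcal{U}^*,$$
the dual cone of $\mathcal{U}$, once $\R^m$ is identified with its own dual via the standard pairing.

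Next I would observe that $\mathcal{U}$, being the intersection of the linear subspace $\ker(V)=\mathcal{V}_{\mathrm r}(Q)$ with the polyhedral cone $\R^m_{\geq 0}$, is itself a polyhedral cone, hence finitely generated by the Minkowski--Weyl theorem. Its dual cone $\mathcal{U}^*$ is then again a polyhedral convex cone: the dual of a finitely generated cone is cut out by finitely many linear inequalities, one for each generator, and duals of cones are always convex. This proves that $\mathcal{W}=\mathcal{U}^*$ is a polyhedral convex cone.

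Finally, the inclusion $\R^m_{\geq 0}\subseteq\mathcal{W}$ follows from $\mathcal{U}\subseteq\R^m_{\geq 0}$ together with the self-duality of the non-negative orthant: $\R^m_{\geq 0}=(\R^m_{\geq 0})^*\subseteq\mathcal{U}^*=\mathcal{W}$. Equivalently and directly: if $\w\geq\mathbf{0}$ then $\varphi_\w(\u)=\w^T\u\geq 0$ for all $\u\in\R^m_{\geq 0}$, a fortiori for all $\u\in\mathcal{U}$, so $\varphi_\w$ is bounded below on $\mathcal{U}$. There is essentially no hard step here; the only point that deserves a word of justification is the equivalence ``lower bounded on a cone $\iff$ non-negative on that cone'', which is the elementary observation recorded in the first paragraph, everything else being standard polyhedral geometry.
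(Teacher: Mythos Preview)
Your proof is correct. The route, however, differs from the paper's. The paper argues directly from the definition of $\mathcal{W}$ as the set of $\w$ for which $\varphi_\w$ is lower bounded on $P_{\mathbf 0}$: it checks by hand that if $\w,\w'$ are each bounded below on $P_{\mathbf 0}$ then so is any non-negative combination $\lambda\w+\mu\w'$, and it obtains $\R^m_{\geq 0}\subseteq\mathcal{W}$ simply from $P_{\mathbf 0}\subseteq\R^m_{\geq 0}$. The identification $\mathcal{W}=\mathcal{U}^*$ that you prove at the outset is in the paper the content of the \emph{next} proposition, so you have effectively merged the two statements and derived everything from duality and Minkowski--Weyl.

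What each approach buys: the paper's direct argument is shorter for the convex-cone and orthant-containment claims, but as written it does not actually justify the word ``polyhedral''; that only becomes clear once one knows $\mathcal{W}=\mathcal{U}^*$ with $\mathcal{U}$ polyhedral, which is your starting point. Your approach therefore handles polyhedrality cleanly in one stroke, at the cost of front-loading the duality identification that the paper postpones.
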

	\begin{proof}
		If $\w,\w'\in\mathcal{W}$ then there exists $K\in\R$ such that
		$\w^T\x, \w'^T\x\geq K$ for every $\x\in P_{\mathbf 0}$. Then for $\lambda,\mu\geq 0$ and $\x\in P_{\mathbf 0}$
		$$(\lambda \w+\mu \w')^T\x\geq (\lambda+\mu)K$$
		so that $\lambda \w+\mu \w'\in\mathcal{W}$, and $\mathcal{W}$ is a polyhedral convex cone.\\
		The second assertion is obvious, since $P_{\mathbf 0}\subseteq \R^m_{\geq 0}$.
	\end{proof}
	
	\begin{proposition}\label{prop:dualityUW}
		$\mathcal{W}$ is dual to the cone $\mathcal{U}$:
		$$\mathcal{W}=\mathcal{U}^*=\{\w\in \R^m\ |\ \w^T\x\geq 0,\forall \x\in \mathcal{U}\}.$$
	\end{proposition}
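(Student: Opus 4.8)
The plan is to exploit the fact that $\mathcal{U}$ is a \emph{cone}: a linear functional is bounded below on a cone precisely when it is nonnegative on that cone, and this is all that separates the defining property of $\mathcal{W}$ from the defining property of $\mathcal{U}^*$.

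First I would record that $\mathcal{U}$ is a convex cone containing $\mathbf{0}$. Indeed, by \eqref{eq:U} it is the intersection $\ker(V)\cap\R^m_{\geq 0}$ of a linear subspace with the first orthant, hence it is stable under multiplication by nonnegative scalars. Next, fix $\w\in\R^m$; I claim that $\varphi_\w$ is lower bounded on $\mathcal{U}$ if and only if $\w^T\x\geq 0$ for every $\x\in\mathcal{U}$. The ``if'' direction is immediate: in that case $0$ is a lower bound for $\varphi_\w$ on $\mathcal{U}$ (and it is attained at $\mathbf{0}\in\mathcal{U}$). For the ``only if'' direction I argue by contraposition: if $\w^T\x_0<0$ for some $\x_0\in\mathcal{U}$, then $\lambda\x_0\in\mathcal{U}$ for all $\lambda\geq 0$ because $\mathcal{U}$ is a cone, and $\varphi_\w(\lambda\x_0)=\lambda\,\w^T\x_0\to-\infty$ as $\lambda\to+\infty$, so $\varphi_\w$ is not lower bounded on $\mathcal{U}$.

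Combining this claim with the definition \eqref{eq:defW} of $\mathcal{W}$ and the definition of the dual cone yields
$$\mathcal{W}=\{\w\in\R^m\ |\ \varphi_\w\hbox{ lower bounded on }\mathcal{U}\}=\{\w\in\R^m\ |\ \w^T\x\geq 0\ \forall\,\x\in\mathcal{U}\}=\mathcal{U}^*,$$
which is the assertion. There is essentially no obstacle here; the one point requiring (trivial) attention is to notice that $\mathcal{U}$ is genuinely a cone, so that the scaling argument applies — this is exactly where one uses the description $\mathcal{U}=\ker(V)\cap\R^m_{\geq 0}$ coming from \eqref{eq:U}, rather than the description of $\mathcal{U}=P_{\mathbf{0}}$ merely as a polyhedron.
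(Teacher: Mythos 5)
Your proposal is correct and follows essentially the same argument as the paper: the inclusion $\mathcal{U}^*\subseteq\mathcal{W}$ is immediate, and the reverse inclusion is obtained by the same scaling argument (if $\w^T\x_0<0$ for some $\x_0\in\mathcal{U}$, then $\varphi_\w(\lambda\x_0)\to-\infty$ as $\lambda\to+\infty$, contradicting lower-boundedness). The only difference is presentational — you state the conical structure of $\mathcal{U}$ explicitly before using it, which the paper leaves implicit.
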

	\begin{proof}
		Obviously $\mathcal{U}^*\subseteq \mathcal{W}$. Conversely suppose $\w\in\mathcal{W}$ and $\w^T\x<0$ for some $\x\in\mathcal{U}$. Then $\lim_{\lambda\to +\infty }\w^T(\lambda \x)=-\infty$, so that $\varphi_\w$ is not lower bounded, a contradiction. Therefore $\mathcal{W}\subseteq \mathcal{U}^*$.
	\end{proof}

 The next propositions give further characterizations of the cone $\mathcal{W}$ which will be useful in the following:
\begin{proposition}
	\label{prop:coWtre}
	$$\mathcal{W}=\{\w\in\R^m\ |\ \exists \y\in\R^n \hbox{ such that }  V^T\y\leq \w\}.$$
\end{proposition}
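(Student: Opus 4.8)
The plan is to reduce to the duality description $\mathcal{W}=\mathcal{U}^*$ established in Proposition~\ref{prop:dualityUW}, where $\mathcal{U}=\ker(V)\cap\R^m_{\geq 0}$ by \eqref{eq:U}, and then to recognize the set on the right-hand side as $\im(V^T)+\R^m_{\geq 0}$, i.e.\ as the sum of the dual cones of the two cones whose intersection is $\mathcal{U}$. Equivalently, the statement amounts to a theorem of the alternative for the linear system $V^T\y\leq\w$ in the unknown $\y\in\R^n$.

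First I would prove the inclusion $\supseteq$, which is a one-line computation. If $V^T\y\leq\w$ for some $\y\in\R^n$, set $\ss:=\w-V^T\y\in\R^m_{\geq 0}$. For any $\x\in\mathcal{U}$ we have $V\x=\0$ and $\x\geq\0$, hence $\w^T\x=\y^T(V\x)+\ss^T\x=\ss^T\x\geq 0$; thus $\w\in\mathcal{U}^*=\mathcal{W}$ by Proposition~\ref{prop:dualityUW}.

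For the reverse inclusion $\subseteq$ I would invoke Farkas' lemma in the form: the system $V^T\y\leq\w$ has a solution $\y$ if and only if $\w^T\x\geq 0$ for every $\x\geq\0$ with $V\x=\0$. The ``only if'' half is exactly the computation above; the ``if'' half is the non-trivial part, and it is equivalent to the assertion that the cone $\im(V^T)+\R^m_{\geq 0}$ coincides with the dual of $\mathcal{U}=\ker(V)\cap\R^m_{\geq 0}$. Indeed $(\ker V)^*=\ker(V)^\perp=\im(V^T)$ and $(\R^m_{\geq 0})^*=\R^m_{\geq 0}$, and the dual of an intersection of closed convex cones is the closure of the sum of the duals; here the closure is superfluous, since $\im(V^T)+\R^m_{\geq 0}$ is finitely generated (by the columns of $V^T$, their opposites, and the standard basis vectors of $\R^m$), hence polyhedral and closed. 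Applying this with $\x$ ranging over $\mathcal{U}$ and recalling $\mathcal{W}=\mathcal{U}^*$ gives: $\w\in\mathcal{W}$ implies there exists $\y\in\R^n$ with $V^T\y\leq\w$, which is the claim.

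The only genuine obstacle is the non-trivial inclusion, which is not a calculation but an appeal to a standard fact of polyhedral geometry / linear programming duality (Farkas' lemma, or the description of the dual of an intersection of cones together with the closedness of a finitely generated cone). Once that is invoked, the proof is immediate.
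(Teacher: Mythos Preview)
Your proof is correct and follows essentially the same route as the paper: both reduce to $\mathcal{W}=\mathcal{U}^*$ (Proposition~\ref{prop:dualityUW}) and then invoke the Farkas-type alternative ``$V^T\y\leq\w$ has a solution iff $\w^T\x\geq 0$ for all $\x\geq\0$ with $V\x=\0$'' (the paper quotes this as Proposition~\ref{prop:Farkasvariante}, \cite[Cor.~7.1e]{Schrijver86}, applied with $A=V^T$). Your additional remark recasting the hard direction as $(\ker V\cap\R^m_{\geq 0})^*=\im(V^T)+\R^m_{\geq 0}$ via duality of polyhedral cones is a nice alternative formulation but amounts to the same content.
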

The proof is an immediate consequence of the following variant of Farkas' Lemma applied with $A=V^T$, by observing that $\coU=\{\u\in \R^m\ |\ V\u=\mathbf{0}, \u\geq \mathbf{0}\}$.
\begin{proposition}\label{prop:Farkasvariante}\cite[Corollary 7.1e, p. 89]{Schrijver86}
	Let $A$ be a $m\times n$ real matrix and $\w\in\R^m$ be a column vector. Then the system $A\y\leq \w$ of linear inequalities has a solution $\y\in\R^n$ if and only if $\w^T \u\geq 0$ for each column vector $\u\geq \mathbf{0}$ in $\R^m$ such that $A^T\u=\mathbf{0}$.
\end{proposition}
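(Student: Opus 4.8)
The plan is to derive this from the standard theorem of the alternative for polyhedral cones; one could also quote it verbatim as \cite[Corollary 7.1e]{Schrijver86}, but a self-contained derivation from elementary convexity is short. I would first dispose of the ``only if'' direction: if $\y_0\in\R^n$ satisfies $A\y_0\le\w$, then $\w-A\y_0\ge\mathbf{0}$, hence for any $\u\ge\mathbf{0}$ with $A^T\u=\mathbf{0}$ one gets $0\le\u^T(\w-A\y_0)=\w^T\u-(A^T\u)^T\y_0=\w^T\u$.

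For the ``if'' direction I would argue by contraposition, after reformulating feasibility geometrically. The system $A\y\le\w$ is solvable exactly when $\w$ lies in the cone $C:=\im(A)+\R^m_{\ge 0}$, because $A\y\le\w$ is equivalent to $\w-A\y\in\R^m_{\ge 0}$, i.e. to $\w\in\im(A)+\R^m_{\ge 0}$. Writing $a_1,\dots,a_n$ for the columns of $A$ and $\e_1,\dots,\e_m$ for the standard basis of $\R^m$, we have $C=\langle a_1,-a_1,\dots,a_n,-a_n,\e_1,\dots,\e_m\rangle$, a finitely generated convex cone, hence a closed set by the Minkowski--Weyl theorem. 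So if $A\y\le\w$ has no solution then $\w\notin C$, and the separating hyperplane theorem, applied to the closed convex set $C$ and the point $\w$, yields $\u\in\R^m$ with $\u^T\w<\inf_{c\in C}\u^T c$. Since $\mathbf{0}\in C$ this infimum is $\le 0$, and since $C$ is a cone it equals either $0$ or $-\infty$; being strictly larger than the real number $\u^T\w$, it must be $0$. Thus $\u^T c\ge 0$ for all $c\in C$ while $\u^T\w<0$; taking $c=\pm a_i$ forces $\u^T a_i=0$ for each $i$, i.e. $A^T\u=\mathbf{0}$, and taking $c=\e_i$ forces $u_i\ge 0$, i.e. $\u\ge\mathbf{0}$. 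This exhibits a vector $\u\ge\mathbf{0}$ with $A^T\u=\mathbf{0}$ and $\w^T\u<0$, which is precisely the negation of the right-hand condition, completing the contrapositive.

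The only non-formal ingredient is the closedness of the finitely generated cone $C$ --- equivalently the Minkowski--Weyl theorem, or Fourier--Motzkin elimination, or linear programming duality --- so I do not expect any genuine obstacle here; should one prefer, the statement can simply be cited from Schrijver as in the text.
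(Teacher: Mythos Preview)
Your argument is correct in both directions; the only nontrivial input, closedness of the finitely generated cone $C=\im(A)+\R^m_{\ge 0}$, is acknowledged and can be supplied by Minkowski--Weyl as you say. The paper itself does not prove this proposition at all --- it is stated as a quotation from \cite[Corollary~7.1e]{Schrijver86} and used as a black box --- so there is no in-paper proof to compare against; your self-contained derivation simply goes beyond what the paper provides.
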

\begin{proposition}\label{prop:W}
	$\coW=Q^{-1}(\langle Q\rangle)$.
\end{proposition}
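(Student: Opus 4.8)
The plan is to rewrite both sides of the asserted equality in the common form $\R^m_{\geq 0}+\ker(Q)$, and then to conclude by Proposition~\ref{prop:coWtre}.

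First I would rewrite the right-hand side. Regarding $Q$ as the linear map $\R^m\to\R^r$, the cone $\langle Q\rangle$ generated by the columns of $Q$ is precisely $Q(\R^m_{\geq 0})$, so $\w\in Q^{-1}(\langle Q\rangle)$ if and only if $Q\w=Q\u$ for some $\u\in\R^m_{\geq 0}$, i.e. $\w-\u\in\ker(Q)$ for some $\u\geq\0$. By the elementary set identity $Q^{-1}(Q(S))=S+\ker(Q)$, valid for every $S\subseteq\R^m$, applied with $S=\R^m_{\geq 0}$, this says exactly that $Q^{-1}(\langle Q\rangle)=\R^m_{\geq 0}+\ker(Q)$.

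Then I would identify $\ker(Q)$. By definition of the Gale dual, the rows of $Q$ form an $\R$-basis of $\ker(V)$, the kernel of $V\colon\R^m\to\R^n$; in particular $Q\,V^T=0$, so each of the $n$ columns of $V^T$ lies in $\ker(Q)$. These columns are linearly independent since $\rk(V)=n$, while $\dim\ker(Q)=m-\rk(Q)=m-r=n$; hence $\ker(Q)=\mathcal{V}_{\mathrm r}(V)=\{V^T\y\mid\y\in\R^n\}$. Substituting, $Q^{-1}(\langle Q\rangle)=\{V^T\y+\u\mid\y\in\R^n,\ \u\geq\0\}=\{\w\in\R^m\mid\exists\,\y\in\R^n\text{ with }V^T\y\leq\w\}$, and by Proposition~\ref{prop:coWtre} this last set is $\mathcal{W}$, which finishes the argument.

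The only step that is not purely formal is the identification $\ker(Q)=\mathcal{V}_{\mathrm r}(V)$, but this is just Gale-duality bookkeeping (a rank count), so I expect no real obstacle. If one prefers to avoid quoting Proposition~\ref{prop:coWtre}, the same common description $\R^m_{\geq 0}+\ker(Q)$ can be obtained from the left instead, by dualizing the equality $\mathcal{U}=\mathcal{V}_{\mathrm r}(Q)\cap\R^m_{\geq 0}$ of Proposition~\ref{prop:dualityUW}: one has $\mathcal{W}=\mathcal{U}^*=(\mathcal{V}_{\mathrm r}(Q)\cap\R^m_{\geq 0})^*=\overline{\ker(Q)+\R^m_{\geq 0}}$, and $\ker(Q)+\R^m_{\geq 0}$ is already closed, being the sum of a linear subspace and a finitely generated cone.
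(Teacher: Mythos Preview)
Your argument is correct. The route, however, is not quite the paper's. The paper proves both inclusions directly from the duality $\mathcal{W}=\mathcal{U}^*$ (Proposition~\ref{prop:dualityUW}): for $Q^{-1}(\langle Q\rangle)\subseteq\mathcal{W}$ it writes $\w=\w'+\t$ with $\w'\geq\0$ and $\t\in\mathcal{V}_{\mathrm r}(V)$ and checks $\w^T\u\geq 0$ on $\mathcal{U}$; for the reverse inclusion it shows $(Q^{-1}(\langle Q\rangle))^*\subseteq\mathcal{U}$ by testing against $\pm\t\in\mathcal{V}_{\mathrm r}(V)$ and the $\e_i$, then applies double duality. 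You instead collapse both sides to the single description $\R^m_{\geq 0}+\ker(Q)$ and invoke Proposition~\ref{prop:coWtre}, which is cleaner and makes transparent that the content here is just Farkas plus the Gale-duality identification $\ker(Q)=\mathcal{V}_{\mathrm r}(V)$. The alternative you sketch at the end---dualizing $\mathcal{U}=\mathcal{V}_{\mathrm r}(Q)\cap\R^m_{\geq 0}$ and noting the resulting polyhedral sum is already closed---is essentially the paper's argument repackaged.
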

\begin{proof}
	We firstly show that $Q^{-1}(\langle Q\rangle)\subseteq \coW$. Let $\w\in Q^{-1}(\langle Q\rangle)$. Then $Q(\w)\in\langle Q\rangle$, so that there esists $\w'\in\R^m_{\geq 0}$ such that $Q(\w)=Q(\w')$. This in turn implies that there exists $\t\in\coL_{\mathrm r}(V)$ such that $\w=\w'+\t$. If $\u\in \coU =\coV_{\mathrm r}(Q)\cap \R^m_{\geq 0}$ then $\w^T \u=(\w')^T\u+\t^T\u\geq 0$, because $(\w')^T\u\geq 0$ and $\t^T\u=0$. Then $\w\in\coU^*=\coW$.\\
	Conversely, let $\u\in Q^{-1}(\langle Q\rangle)^*$, so that
	\begin{equation}
	\label{eq:ddd}
	\t^T\u\geq 0\hbox{ for every } \t\hbox{ such that } Q\t\in\langle Q\rangle.\end{equation}
	Then $\u\in\coV_{\mathrm r}(Q)$; indeed if $\t\in\coV_{\mathrm r}(V)$ then $Q\t=0$ so that, $\pm \t\in Q^{-1}(\langle Q\rangle)$; then (\ref{eq:ddd}) implies that $\pm\t^T\u\geq 0$, so that $\t^T\u=0$. Moreover $\u\in \R^m_{\geq 0}$; indeed every element $\e_i$ in the canonical basis of $\R^m$ lies in $Q^{-1}(\langle Q\rangle)$, so that (\ref{eq:ddd}) implies that $u_i\geq 0$ for $i=1,\ldots, m$. Therefore we have shown that $(Q^{-1}(\langle Q\rangle))^*\subseteq \coV_{\mathrm r}(Q)\cap \R^m_{\geq 0}=\coU$; by the properties of the dual cone this implies $\coW=\coU^*\subseteq (Q^{-1}(\langle Q\rangle))^{**}=Q^{-1}(\langle Q\rangle))$.
\end{proof}
		
        \subsection{The Gr\"obner fan} To every
			$\w\in\R^m$, we can associate a relation $ \preceq_\w\ $  on $\N^m$, defined by  $$\u_1\preceq_\w \u_2\hbox{ if } \varphi_\w(\u_1)\leq \varphi_\w(\u_2).$$
			Let $\KK$ be any field, and $\mathbf{x}= x_1,\ldots, x_m$. For a polynomial $f=\sum_{\aa} c_\aa\mathbf{x}^\aa\in \KK[\mathbf{x}]$ the \emph{inital term} $\mathrm{in}_\w(f)$ of $f$ w.r.t. $\w$ is defined as the sum of all terms $c_\aa\mathbf{x}^\aa$ in $f$ such that $\varphi_\w(\aa)$ is maximal.  If $I$ is an ideal in $\KK[\x]$, the \emph{initial ideal} of $I$ w.r.t. $\w$ is then $\mathrm{in}_\w(I)=\{\mathrm{in}_\w(f)\ |\ f\in I\}$. If $\mathrm{in}_\w(I)$ is monomial, then $\w$ is said \emph{generic} for $I$.
			It is well-known \cite[Cor. 1.10 and Prop. 1.11]{Sturmfels1996} that the set of initial ideals $\mathrm{in}_\preceq(I)$ of $I$ determined by term orders $\preceq$ coincide with the set of  inital  ideals $\mathrm{in}_\w(I)$ of $I$ associated to generic weight vectors  $\w\in\RR^m_{\geq 0}$.  \\
			
When $\w\not\in\R^m_{\geq 0}$, $\preceq_\w$  cannot be refined to be a term order; however it is still possible that, for some ideal $I$, $\mathrm{in}_{\w}(I)$  is also the initial ideal of $I$ with respect to some term order.

 Therefore we introduce the following:
 \begin{definition}
 	Let $I\subseteq \KK[\x]$ be an ideal. A monomial ideal $J$ is an \emph{initial ideal} of $I$ if $J=\mathrm{in}_\preceq(I)$ for some term order $\preceq$.
 \end{definition}

 \begin{proposition}\label{prop:iniIVimplicaW} Let  $\w\in\R^m$ be generic; if
 	$\mathrm{in}_{\w}(I_V)$ is an initial ideal of $I_V$ then $\w\in\mathcal{W}$.\end{proposition}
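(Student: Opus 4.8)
The plan is to argue the contrapositive: if $\w\notin\mathcal{W}$, I will produce an element of $I_V$ whose $\w$-leading term is a nonzero constant, so that $\mathrm{in}_\w(I_V)$ contains a unit; but the initial ideal $\mathrm{in}_\preceq(I_V)$ of $I_V$ with respect to any term order $\preceq$ is a proper (monomial) ideal, since $I_V=\ker\pi$ is proper ($\pi(1)=1$). Hence $\mathrm{in}_\w(I_V)$ cannot be an initial ideal of $I_V$ unless $\w\in\mathcal{W}$. (Genericity of $\w$ plays no role in this direction; it only guarantees that $\mathrm{in}_\w(I_V)$ is monomial, so that the question is well posed.)

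By Proposition~\ref{prop:dualityUW} together with \eqref{eq:U} we have $\mathcal{W}=\mathcal{U}^{*}$ with $\mathcal{U}=\mathcal{V}_{\mathrm r}(Q)\cap\R^m_{\geq 0}$, a rational polyhedral cone. So if $\w\notin\mathcal{W}$ there is $\x\in\mathcal{U}$ with $\w^T\x<0$; writing $\x$ as a non-negative combination of the finitely many integral generators of $\mathcal{U}$, one of those generators, call it $\u_0\in\mathcal{V}_{\mathrm r}(Q)\cap\N^m=\mathcal{L}_{\mathrm r}(Q)\cap\N^m$, already satisfies $\w^T\u_0<0$. (Alternatively, every non-negative $\u\in\mathcal{L}_{\mathrm r}(Q)$ yields a binomial $\x^{\u}-1\in I_V$ and such $\u$'s span $\mathcal{U}$, so it suffices to pick one pairing negatively with $\w$; cf. the Remark following the definition of $I_V$.)

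Now $\u_0\in\mathcal{L}_{\mathrm r}(Q)$ means $V\u_0=\0$, so $\pi(\x^{\u_0})=\t^{V\u_0}=1=\pi(1)$ and hence $g_{\u_0}=\x^{\u_0}-1\in I_V$ (here $\u_0^{+}=\u_0$, $\u_0^{-}=\0$ since $\u_0\geq\0$). Since $\varphi_\w(\u_0)=\w^T\u_0<0=\varphi_\w(\0)$, the term of $g_{\u_0}$ on which $\varphi_\w$ is maximal is the constant $-1$; thus $-1=\mathrm{in}_\w(g_{\u_0})\in\mathrm{in}_\w(I_V)$, so $\mathrm{in}_\w(I_V)$ is the whole ring $\KK[\x]$. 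Were this equal to $\mathrm{in}_\preceq(I_V)$ for a term order $\preceq$, then $1\in\mathrm{in}_\preceq(I_V)$; but every monomial is $\succeq\x^{\0}$, so any $f\in I_V$ whose $\preceq$-leading term is a nonzero constant must equal that constant, forcing $1\in I_V$ — impossible. This is the desired contradiction, hence $\w\in\mathcal{W}$.

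The argument is short and I do not anticipate a real obstacle; the only step requiring a line of care is the passage from a real vector $\x\in\mathcal{U}$ with $\w^T\x<0$ to an integral one $\u_0$, which is immediate from the rationality of $\mathcal{U}$ (or from the cited Remark on the binomials $\x^{\u}-1\in I_V$).
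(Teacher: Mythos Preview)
Your argument is correct and is essentially the contrapositive of the paper's proof: both hinge on the binomials $\x^{\u}-1\in I_V$ for $\u\in\mathcal{L}_{\mathrm r}(Q)\cap\N^m$ and on the fact that $\mathrm{in}_\preceq(I_V)$ is proper. The paper argues directly that $\x^\u\in\mathrm{in}_\preceq(I_V)=\mathrm{in}_\w(I_V)$ forces $\w^T\u\geq 0$, whereas you show that $\w^T\u_0<0$ would put the unit $-1$ into $\mathrm{in}_\w(I_V)$; your version makes the implicit step in the paper's deduction (``$\x^\u\in\mathrm{in}_\w(I_V)\Rightarrow \x^\u\succeq_\w 1$'') fully explicit, and your remark that genericity is not actually needed in this direction is accurate.
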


 \begin{proof}
 	Suppose that $\mathrm{in}_{\w}(I_V)$ is an initial ideal of $I_V$; then by definition there is a term order $\preceq $ such that  $\mathrm{in}_{{\w}}(I_V)=\mathrm{in}_{\preceq}(I_V)$. Let $\u\in\mathcal{L}_{\mathrm r}(Q)\cap\R^m_{\geq 0}$; then $\x^\u-1\in I_V $ and   thus $\x^\u\in \mathrm{in}_\preceq(I_V)=\mathrm{in}_{{\w}}(I_V)$. Then $\x^\u\succeq_\w 1$ so that $\w^T\u\geq 0$. Notice that by \eqref{eq:U} the cone $\mathcal{U}$ is generated by  $\mathcal{L}_{\mathrm r}(Q)\cap\R^m_{\geq 0} $.  This shows that $\w\in \mathcal{U}^*=\mathcal{W}$, by Proposition \ref{prop:dualityUW}.
 \end{proof}

 Two vectors in $\mathcal{W}$ determine the same initial ideal of $I_V$ when they represent linearly (i.e. numerically) equivalent divisors; this is established by the following

 \begin{proposition}
 	Choose $\w_1,\w_2 \in\mathcal{W}$. If $ Q\w_1=Q\w_2$ then
 	$\mathrm{in}_{{\w_1}}(I_V)=\mathrm{in}_{{\w_2}}(I_V)$.
 \end{proposition}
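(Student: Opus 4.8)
The plan is to reduce the claimed equality of initial ideals to an equality of initial \emph{forms} of polynomials supported on a single fiber, and then to exploit the hypothesis $Q\w_1=Q\w_2$ to show that $\w_1$ and $\w_2$ order the monomials inside each fiber in exactly the same way.

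First I would translate the hypothesis. From the defining identity $Q\cdot V^T=0$ of a Gale dual matrix, together with the rank count $\rk(V)=n$, $\rk(Q)=r$, $n+r=m$, one gets $\ker(Q)=\mathcal{V}_{\mathrm r}(V)$; hence $Q\w_1=Q\w_2$ forces $\w_1-\w_2=V^T\y$ for some $\y\in\R^n$. Consequently, for every $\u\in\ker(V)$ --- in particular for every $\u\in\mathcal{L}_{\mathrm r}(Q)$ --- we have $(\w_1-\w_2)^T\u=\y^T(V\u)=0$, i.e. $\w_1^T\u=\w_2^T\u$. Since two exponent vectors $\aa,\aa'\in\N^m$ lie in the same fiber precisely when $\aa-\aa'\in\mathcal{L}_{\mathrm r}(Q)$, this means that $\varphi_{\w_1}-\varphi_{\w_2}$ is constant on the support of any polynomial supported on a single fiber.

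Next I would record a reduction to single-fiber polynomials. Writing an arbitrary $f\in I_V$ as $f=f_1+\cdots+f_k$, with each $f_i$ supported on a single fiber and $f_i\in I_V$ by Proposition~\ref{prop:FiberDecomposition}, the supports of the $f_i$ are pairwise disjoint, so $\mathrm{in}_\w(f)$ is the sum of those $\mathrm{in}_\w(f_i)$ for which $\varphi_\w$ already attains its maximum over $\Supp(f)$ on $\Supp(f_i)$. Hence $\mathrm{in}_\w(I_V)$ is generated, as an ideal, by the set $\{\mathrm{in}_\w(g)\ :\ g\in I_V\text{ supported on a single fiber}\}$; equivalently, this is just the homogeneity of $I_V$ for the grading $\deg(x_j)=\v_j$ --- whose graded pieces of $\N^m$ are exactly the fibers --- together with the standard fact that the initial ideal of a homogeneous ideal is generated by the initial forms of its homogeneous elements.

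Finally I would combine the two steps. If $g\in I_V$ is supported on a single fiber, then by the first step $\varphi_{\w_1}$ and $\varphi_{\w_2}$ differ by an additive constant on $\Supp(g)$, hence attain their maxima at exactly the same monomials, so $\mathrm{in}_{\w_1}(g)=\mathrm{in}_{\w_2}(g)$; applying this to a generating set as in the reduction step gives $\mathrm{in}_{\w_1}(I_V)=\mathrm{in}_{\w_2}(I_V)$. The only mildly delicate point is the reduction step, where one must not expect $\mathrm{in}_\w(f)=\sum_i\mathrm{in}_\w(f_i)$ in general but only that $\mathrm{in}_\w(f)$ is a partial sum of the $\mathrm{in}_\w(f_i)$ --- which is nevertheless enough to generate $\mathrm{in}_\w(I_V)$ from single-fiber initial forms. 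I note in passing that the hypothesis $\w_1,\w_2\in\mathcal{W}$ is not actually needed for this statement, and that by Proposition~\ref{prop:W} the cone $\mathcal{W}=Q^{-1}(\langle Q\rangle)$ is anyway a union of fibers of $Q$, so $\w_1\in\mathcal{W}$ already entails $\w_2\in\mathcal{W}$.
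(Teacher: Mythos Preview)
Your proof is correct and follows essentially the same idea as the paper's: both use that $\w_1-\w_2\in\ker(Q)$ implies $\w_1^T\u=\w_2^T\u$ for all $\u\in\mathcal{L}_{\mathrm r}(Q)$, hence the two weights give the same initial form on the binomial generators $g_\u$. You are more careful than the paper in one respect: the paper concludes directly from ``$I_V$ is generated by the $g_\u$'', whereas you correctly note that agreement of initial forms on a generating set does not by itself force equality of initial ideals, and you fill this gap via the fiber decomposition (equivalently, the $\Z^n$-homogeneity of $I_V$), which is exactly the right justification. Your closing remark that the hypothesis $\w_1,\w_2\in\mathcal{W}$ is unnecessary is also correct.
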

 \begin{proof}
 	We can write $\w_2=\w_1+\r$, with $\r\in\ker(Q)$. Then  $\w_1^T \u =\w_2^T\u$ holds for every $\u\in\mathcal{L}_{\mathrm r}(Q)$, so that $\mathrm{in}_{{\w_1}}(g_\u)=\mathrm{in}_{{\w_2}}(g_\u)$.  Then the result follows because $I_V$ is generated by binomials $g_{\u}$ for $\u\in \Ls_r(Q)$.
 \end{proof}
 The converse of Proposition \ref{prop:iniIVimplicaW} is also true. In order to establish it we need some preliminary results.

 \begin{lemma}\label{lem:raffinamento}
 	Let $\u\in\N^m, \w\in \mathcal{W}$. Then $\preceq_\w$  refines the standard partial order $\leq$ on $\mathcal{F}(\u)$;  that is
 	$$\v_1\leq \v_2\Rightarrow {\v_1}\preceq_\w {\v_2}, \hbox{ for every } \v_1,\v_2\in\mathcal{F}(\u).$$
 	
 \end{lemma}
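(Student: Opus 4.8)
The plan is to reduce everything to the duality $\mathcal{W}=\mathcal{U}^*$ of Proposition~\ref{prop:dualityUW}. Fix $\u\in\N^m$ and $\w\in\mathcal{W}$, and take $\v_1,\v_2\in\mathcal{F}(\u)$ with $\v_1\leq\v_2$; set $\d:=\v_2-\v_1$. The first step is to locate $\d$ inside the cone $\mathcal{U}$. On the one hand, since $\v_1$ and $\v_2$ both lie in the fiber $\mathcal{F}(\u)=(\mathcal{L}_{\mathrm r}(Q)+\u)\cap\N^m$, their difference $\d$ lies in $\mathcal{L}_{\mathrm r}(Q)$. On the other hand, the hypothesis $\v_1\leq\v_2$ means exactly that $\d\in\N^m$, in particular $\d\in\R^m_{\geq 0}$. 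Hence $\d\in\mathcal{L}_{\mathrm r}(Q)\cap\R^m_{\geq 0}\subseteq\mathcal{V}_{\mathrm r}(Q)\cap\R^m_{\geq 0}=\mathcal{U}$, where the last equality is \eqref{eq:U}.

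The second step is immediate: since $\w\in\mathcal{W}=\mathcal{U}^*$ by Proposition~\ref{prop:dualityUW} and $\d\in\mathcal{U}$, we get $\w^T\d\geq 0$, i.e. $\w^T\v_1\leq\w^T\v_2$. Unwinding the definition of $\varphi_\w$, this says $\varphi_\w(\v_1)\leq\varphi_\w(\v_2)$, which is precisely $\v_1\preceq_\w\v_2$. This establishes the asserted refinement.

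There is essentially no hard point here; the only thing worth pausing on is that $\d=\v_2-\v_1$ must lie \emph{simultaneously} in the lattice $\mathcal{L}_{\mathrm r}(Q)$ (which uses that $\v_1,\v_2$ sit in the \emph{same} fiber) and in the nonnegative orthant (which uses $\v_1\leq\v_2$), so that it falls into the cone $\mathcal{U}$ on which every element of $\mathcal{W}$ is nonnegative. If one prefers to avoid the dual-cone formalism, the same conclusion follows from Proposition~\ref{prop:coWtre}: writing $\w=V^T\y+\e$ with $\y\in\R^n$ and $\e\geq\mathbf{0}$, and noting $V\d=\mathbf{0}$ since $\d\in\mathcal{L}_{\mathrm r}(Q)=\ker V$, one gets $\w^T\d=\y^TV\d+\e^T\d=\e^T\d\geq 0$.
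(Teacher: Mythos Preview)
Your proof is correct and follows essentially the same route as the paper: write the difference $\v_2-\v_1$ as an element of $\mathcal{L}_{\mathrm r}(Q)\cap\R^m_{\geq 0}\subseteq\mathcal{U}$ and apply the duality $\mathcal{W}=\mathcal{U}^*$ from Proposition~\ref{prop:dualityUW}. The extra paragraph with the alternative via Proposition~\ref{prop:coWtre} is also fine but not needed.
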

 \begin{proof} If $\v_1\leq \v_2$ then $\v_2=\v_1+\t$ for some $\t\in\R^m_{\geq 0}\cap \mathcal{L}_{\mathrm r}(Q)\subseteq\coU$. Since $\w\in\coW$, then $\w^T\t\geq 0$ by Proposition \ref{prop:dualityUW}, so that $\w^T\v_1\leq \w^T \v_2$.
 \end{proof}

 Let us recall the following well known result:
 \begin{lemma}[Dickson Lemma, \cite{HerzogHibi2010} Theorem 2.1.1]
 	Every totally unordered subset of $\N^m$ is finite.
 \end{lemma}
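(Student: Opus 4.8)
The plan is to prove the antichain (``totally unordered'') formulation by induction on the number of variables $m$. The base case $m=1$ is immediate: $\N$ is linearly ordered, so any two distinct elements are comparable, and a totally unordered subset of $\N$ has at most one element.

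For the inductive step, assume the statement for $\N^{m-1}$ and suppose, towards a contradiction, that $S\subseteq\N^m$ is an infinite totally unordered set. Fix an element $\aa=(a_1,\ldots,a_m)\in S$. Every other $\bb=(b_1,\ldots,b_m)\in S$ is incomparable with $\aa$, hence $b_i<a_i$ for at least one index $i$. For $1\le i\le m$ and $0\le k<a_i$ set $S_{i,k}=\{\bb\in S\ |\ b_i=k\}$; then $S\setminus\{\aa\}=\bigcup_{i,k}S_{i,k}$ is a finite union, so some $S_{i,k}$ is infinite. Let $p_i\colon\N^m\to\N^{m-1}$ be the projection deleting the $i$-th coordinate. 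On $S_{i,k}$ the $i$-th coordinate is constant, equal to $k$, so $p_i$ restricted to $S_{i,k}$ is injective, and $p_i(\bb)\le p_i(\bb')$ forces $\bb\le\bb'$; hence $p_i(S_{i,k})$ is an infinite totally unordered subset of $\N^{m-1}$, contradicting the inductive hypothesis. This completes the induction.

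No serious obstacle is expected. The only point that needs a moment's care is the final reduction: on a ``slice'' $S_{i,k}$ where the $i$-th coordinate is frozen one must check that $p_i$ is simultaneously injective and order-reflecting, so that both the infiniteness and the antichain property descend to $\N^{m-1}$ — and both facts follow at once from that coordinate being constant on $S_{i,k}$. (If one prefers the monomial-ideal phrasing of Herzog--Hibi's Theorem~2.1.1, the same bookkeeping shows that the minimal monomial generators of any monomial ideal form a finite antichain, which is an equivalent restatement of the lemma.)
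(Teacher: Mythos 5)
Your proof is correct, and it is the standard induction-on-the-number-of-coordinates argument for Dickson's Lemma. The paper itself gives no proof — it simply cites Herzog--Hibi, Theorem~2.1.1 — so there is nothing in the source to compare against beyond noting that your argument is essentially the one found in that reference (and in most textbooks): fix $\aa\in S$, observe that every other element of the antichain must drop below $\aa$ in some coordinate, cover $S\setminus\{\aa\}$ by the finitely many slices where a coordinate is frozen at a value below the corresponding $a_i$, and push an infinite slice down to $\N^{m-1}$ via the coordinate-deleting projection, which is injective and order-reflecting on that slice. All steps check out; the bookkeeping at the end (injectivity and order-reflection both coming from the frozen coordinate) is exactly the point that needs care, and you handle it.
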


 \begin{lemma}
 	\label{lem:minimo} Let $\w\in \mathcal{W}$ be generic. For every $\u\in\N^m$ the fiber $\mathcal{F}(\u)$ has a (unique) minimum with respect to $\preceq_\w$.
 \end{lemma}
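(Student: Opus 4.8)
The plan is to show that the restriction of $\preceq_\w$ to the fiber $\mathcal{F}(\u)$ is a well-order, hence has a unique minimum. The two structural facts already available are: $\preceq_\w$ refines the standard partial order $\leq$ on $\mathcal{F}(\u)$ (Lemma~\ref{lem:raffinamento}), and $\w$ being generic means $\preceq_\w$ is a total preorder on the fiber which is moreover \emph{antisymmetric} there — indeed if $\v_1,\v_2\in\mathcal{F}(\u)$ with $\w^T\v_1=\w^T\v_2$ and $\v_1\neq\v_2$, then $g_{\v_1-\v_2}=\x^{\v_1}-\x^{\v_2}\in I_V$ has $\mathrm{in}_\w(g_{\v_1-\v_2})=\x^{\v_1}-\x^{\v_2}$, contradicting that $\mathrm{in}_\w(I_V)$ is monomial. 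So $\preceq_\w$ is a genuine total order on $\mathcal{F}(\u)$.

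First I would prove existence of a minimum. Suppose $S\subseteq\mathcal{F}(\u)$ is nonempty and has no $\preceq_\w$-minimum; in particular $S$ is infinite. Let $S_{\min}$ be the set of $\leq$-minimal elements of $S$; by the Dickson Lemma, $S_{\min}$ is finite and nonempty, and since $\preceq_\w$ is total on the finite set $S_{\min}$ it has a $\preceq_\w$-least element $\v_0$. I claim $\v_0$ is $\preceq_\w$-minimal in all of $S$: for any $\v\in S$ there is a $\leq$-minimal element $\v'\in S_{\min}$ with $\v'\leq\v$, whence $\v'\preceq_\w\v$ by Lemma~\ref{lem:raffinamento} and $\v_0\preceq_\w\v'$ by choice of $\v_0$, so $\v_0\preceq_\w\v$. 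This contradicts the assumption. Uniqueness is immediate from antisymmetry of $\preceq_\w$ on the fiber, which we established above using genericity.

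I expect the main (minor) obstacle to be the bookkeeping around genericity: one must be careful that "$\w$ generic for $I_V$" is exactly what rules out ties $\w^T\v_1=\w^T\v_2$ within a fiber — the binomial $\x^{\v_1}-\x^{\v_2}$ must genuinely lie in $I_V$, which holds precisely because $\v_1,\v_2$ are in the same fiber, i.e. $\v_1-\v_2\in\mathcal{L}_{\mathrm r}(Q)$ (this is where Proposition~\ref{prop:FiberDecomposition}'s underlying observation, that binomials in $I_V$ correspond to $\mathcal{L}_{\mathrm r}(Q)$, is used). The rest is a standard "finitely many minimal elements" argument powered by Dickson's Lemma, and nothing deep is needed beyond the results already in hand.
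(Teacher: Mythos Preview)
Your existence argument via Dickson's Lemma and Lemma~\ref{lem:raffinamento} is correct and is a pleasant alternative to the paper's route (which argues that $\varphi_\w$ is bounded below on the fiber and that the infimum is attained). However, your uniqueness argument has a genuine gap.

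The problem is the antisymmetry step. You claim that if $\v_1\neq\v_2$ lie in the same fiber with $\w^T\v_1=\w^T\v_2$, then $\mathrm{in}_\w(\x^{\v_1}-\x^{\v_2})=\x^{\v_1}-\x^{\v_2}$ ``contradicts that $\mathrm{in}_\w(I_V)$ is monomial''. It does not: a monomial ideal may perfectly well contain a binomial --- all that follows is $\x^{\v_1},\x^{\v_2}\in\mathrm{in}_\w(I_V)$. In fact, genericity of $\w$ does \emph{not} force $\varphi_\w$ to be injective on a fiber. For instance, with $V=(1,1,-1)$ one has $I_V=(x_1-x_2,\,x_2x_3-1)$, and $\w=(2,1,1)$ is generic (with $\mathrm{in}_\w(I_V)=(x_1,x_2x_3)$), yet $(2,0,2)$ and $(0,3,3)$ lie in $\mathcal{F}(\mathbf{0})$ and both have $\varphi_\w$-value $6$. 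So $\preceq_\w$ is merely a total \emph{preorder} on the fiber, and your well-order claim fails.

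What is true --- and what the paper proves --- is the weaker statement that ties cannot occur \emph{at the bottom}. Once one knows $\x^{\v_1}\in\mathrm{in}_\w(I_V)$, the paper uses Proposition~\ref{prop:FiberDecomposition} in an essential way: since $\x^{\v_1}=\mathrm{in}_\w(f)$ for some $f\in I_V$ and $\x^{\v_1}\notin I_V$, one may assume $f$ is supported in $\mathcal{F}(\u)$ and must contain a monomial $\x^{\v_0}$ with $\varphi_\w(\v_0)<\varphi_\w(\v_1)$. This contradicts $\v_1$ being a minimum, but says nothing about antisymmetry higher up. Your closing remark locates Proposition~\ref{prop:FiberDecomposition} only in the benign fact that $\x^{\v_1}-\x^{\v_2}\in I_V$; its real role is this extraction step, which your argument is missing. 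Plugging this in after your Dickson-based existence step would give a complete proof.
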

 \begin{proof} By Proposition \ref{prop:Wbnondipendedab} and formula \eqref{eq:defW}, $\varphi_\w$ is lower bounded on $\mathcal{F}(\u)$.
 	Therefore the set
 	$$S:=\{\v\in \mathcal{F}(\u)\ |\ \varphi_\w \hbox{ attains its mimimum at  } \v\}$$
 is not empty. Assume that S contains two distinct elements $\v_1\not= \v_2$. Then $\x^{\v_1}-\x^{\v_2}\in I_V$ and $\varphi_\w(\v_1)=\varphi_\w(\v_2)$, so that $\x^{\v_1}-\x^{\v_2}\in \mathrm{in}_\w(I_V)$. Since $\w$ is generic, $\mathrm{in}_\w(I_V)$ is monomial; thus  $\x^{\v_1}, \x^{\v_2}\in  \mathrm{in}_\w(I_V)$.  Since $\x^{\v_1}\not\in I_V$, by Proposition \ref{prop:FiberDecomposition}  there exists $\v_0\in\mathcal{F}(\u)$ such that $\varphi_\w(\v_0)<\varphi_\w(\v_1)$, contrarily to the fact that  $\v_1\in S$. Then $S$ contains a single element.
 \end{proof}

 For a generic $\w\in\mathcal{W}$ define
 \begin{eqnarray*}
 	\Min_\w &=& \{\u\in\N^m \ |\ \u\hbox{ is the minimum of } \mathcal{F}(\u)\hbox{ w.r.t. }\preceq_\w\}\\
 	\overline{\Min}_\w &=& \N^m\setminus \Min_\w\\
 \end{eqnarray*}
 and let $\MnM_\w$ be the subset of $\nM_\w$ consisting of the minimal elements with respect to $\leq$. By Dickson's Lemma, $\MnM_\w$ is a finite set.

 \begin{lemma}\label{lem:supporto}  If $\u\in\MnM_\w$ and $\u_0$ is the minimum of $\mathcal{F}(\u)$ w.r.t. $\preceq_{\w}$, then ${\rm supp}(\u)\cap {\rm supp}(\u_0)=\emptyset$.
 \end{lemma}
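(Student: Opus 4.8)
The plan is to argue by contradiction. Suppose some index $i$ lies in ${\rm supp}(\u)\cap{\rm supp}(\u_0)$; the goal is to exhibit a vector which is strictly smaller than $\u$ in the componentwise partial order $\leq$ and which still fails to be the $\preceq_\w$-minimum of its own fiber. Since $\u\in\MnM_\w$ is by definition a $\leq$-minimal element of $\overline{\Min}_\w$, producing such a vector yields the desired contradiction.

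Concretely, assuming $i\in{\rm supp}(\u)\cap{\rm supp}(\u_0)$ gives $u_i\ge 1$ and $(\u_0)_i\ge 1$, so both $\u-\e_i$ and $\u_0-\e_i$ lie in $\N^m$. Because $\u$ and $\u_0$ belong to the common fiber $\mathcal{F}(\u)$, their difference $\u-\u_0$ lies in $\mathcal{L}_{\mathrm r}(Q)$; hence $(\u-\e_i)-(\u_0-\e_i)=\u-\u_0\in\mathcal{L}_{\mathrm r}(Q)$ as well, so $\u_0-\e_i\in\mathcal{F}(\u-\e_i)$. Next I invoke $\u\in\overline{\Min}_\w$, which says $\u$ is not the $\preceq_\w$-minimum of $\mathcal{F}(\u)$: by Lemma \ref{lem:minimo} that minimum is unique and is exactly $\u_0$, and since the $\preceq_\w$-minimum is precisely the (unique) point where $\varphi_\w$ attains its least value on the fiber, one gets the \emph{strict} inequality $\varphi_\w(\u_0)<\varphi_\w(\u)$, i.e. $\w^T\u_0<\w^T\u$.

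Subtracting $\w^T\e_i$ from both sides gives $\varphi_\w(\u_0-\e_i)<\varphi_\w(\u-\e_i)$. Consequently $\u-\e_i$ cannot be the $\preceq_\w$-minimum of $\mathcal{F}(\u-\e_i)$: whatever that minimum is, its $\varphi_\w$-value is at most $\varphi_\w(\u_0-\e_i)$, hence strictly below $\varphi_\w(\u-\e_i)$. Therefore $\u-\e_i\in\overline{\Min}_\w$, while $\u-\e_i<\u$ componentwise, contradicting $\u\in\MnM_\w$. Hence no common index $i$ exists and ${\rm supp}(\u)\cap{\rm supp}(\u_0)=\emptyset$.

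Once the auxiliary vectors $\u-\e_i$ and $\u_0-\e_i$ are set up, the argument is essentially bookkeeping using the linearity of $\varphi_\w$ and the definition of a fiber. The one point I would handle with care is the \emph{strictness} of $\varphi_\w(\u_0)<\varphi_\w(\u)$: this is where genericity of $\w$ and the uniqueness part of Lemma \ref{lem:minimo} are genuinely used, and if one only had $\varphi_\w(\u_0)\le\varphi_\w(\u)$ the final step would fail. So that is the step I expect to be the main (and really the only) subtlety.
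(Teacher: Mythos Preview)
Your proof is correct and follows essentially the same line as the paper's: both subtract the standard basis vector $\e_i$ (the paper writes $\e_j$) from $\u$ and from $\u_0$, observe that the two shifted vectors lie in the same fiber, and conclude that $\u-\e_i\in\overline{\Min}_\w$, contradicting the $\leq$-minimality of $\u$. The only difference is expository: you spell out explicitly why the inequality $\varphi_\w(\u_0)<\varphi_\w(\u)$ is strict (via the uniqueness in Lemma~\ref{lem:minimo}), whereas the paper records only $\u'_0\preceq_\w\u'$ and leaves that point implicit.
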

 \begin{proof} Assume that $j\in {\rm supp}(\u)\cap {\rm supp}(\u_0)$, let $\e_j$ be the vector having  $1$ at place $j$ and $0$ elsewhere and put $\u'=\u-\e_j$, $\u'_0=\u_0-\e_j$. Then $ \u'_0\preceq_\w\u' <\u$ and $\u'-\u'_0=\u-\u_0$, so that $\u'$ and $\u'_0$ lie in the same fiber. Therefore $\u'\in\nM_\w$ and $\u$ is not minimal.
 \end{proof}
 \begin{lemma}\label{lem:iniziale} Let $\w\in\mathcal{W}$ be generic. Then the following equality of ideals holds:
 	$$ \mathrm{in}_{\w}(I_V)=( \x^\u\ |\ \u \in \nM_\w ) =(\x^\u\ |\ \u \in \MnM_\w)$$
 \end{lemma}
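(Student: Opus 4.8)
The plan is to prove the two equalities separately, starting from the right-hand one, which is essentially formal. Since every $\u \in \nM_\w$ is $\geq$ some element of $\MnM_\w$ (by definition $\MnM_\w$ consists of the $\leq$-minimal elements of $\nM_\w$, and the existence of minimal elements below any given one follows from Dickson's Lemma together with the fact that $\nM_\w$ is an up-set: if $\u \in \nM_\w$ and $\u \leq \u'$ then, by Lemma \ref{lem:raffinamento}, the minimum $\u_0$ of $\mathcal{F}(\u)$ satisfies $\u_0 + (\u'-\u) \in \mathcal{F}(\u')$ and $\u_0 + (\u'-\u) \preceq_\w \u + (\u'-\u) = \u'$ with $\u_0+(\u'-\u) \neq \u'$, hence $\u' \in \nM_\w$). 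Therefore $\x^\u$ is a multiple of $\x^{\u''}$ for some $\u'' \in \MnM_\w$, giving the inclusion $(\x^\u \mid \u \in \nM_\w) \subseteq (\x^\u \mid \u \in \MnM_\w)$; the reverse inclusion is trivial since $\MnM_\w \subseteq \nM_\w$.

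For the main equality $\mathrm{in}_\w(I_V) = (\x^\u \mid \u \in \nM_\w)$, I would argue both inclusions. For $\supseteq$: if $\u \in \nM_\w$ and $\u_0$ is the minimum of $\mathcal{F}(\u)$, then $\u_0 \preceq_\w \u$ and $\u_0 \neq \u$, so $\varphi_\w(\u_0) < \varphi_\w(\u)$ because $\w$ is generic (genericity forces $\preceq_\w$ to be strict on elements of a common fiber — otherwise $\x^\u - \x^{\u_0}$ would be a binomial in $I_V$ with $\varphi_\w(\u)=\varphi_\w(\u_0)$, contradicting that $\mathrm{in}_\w(I_V)$ is monomial, as in the proof of Lemma \ref{lem:minimo}); hence $\mathrm{in}_\w(\x^\u - \x^{\u_0}) = \x^\u$, and since $\x^\u - \x^{\u_0} \in I_V$, we get $\x^\u \in \mathrm{in}_\w(I_V)$. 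For $\subseteq$: since $\mathrm{in}_\w(I_V)$ is a monomial ideal, it suffices to show every monomial $\x^\u \in \mathrm{in}_\w(I_V)$ lies in $(\x^\u \mid \u \in \nM_\w)$; equivalently, by contrapositive, that if $\u \in \Min_\w$ then $\x^\u \notin \mathrm{in}_\w(I_V)$. Suppose $\u \in \Min_\w$ but $\x^\u \in \mathrm{in}_\w(I_V)$. Then there is $f \in I_V$ with $\mathrm{in}_\w(f) = \x^\u$ (up to scalar); applying Proposition \ref{prop:FiberDecomposition}, replace $f$ by its fiber-component $f_i$ whose monomials all lie in $\mathcal{F}(\u)$ — this component is nonzero since $\x^\u$ appears in it, and it still lies in $I_V$. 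Now every exponent vector $\v$ appearing in $f_i$ lies in $\mathcal{F}(\u)$, so $\u \preceq_\w \v$ as $\u$ is the minimum; thus $\varphi_\w(\u) \leq \varphi_\w(\v)$ for all such $\v$, whence $\mathrm{in}_\w(f_i)$ cannot equal the single monomial $\x^\u$ unless $\u$ is also the $\preceq_\w$-maximal exponent — but then all exponents of $f_i$ lie in a single fiber and have the same $\varphi_\w$-value, forcing $\mathrm{in}_\w(f_i) = f_i$ to be a sum of $\geq 2$ monomials (as $f_i \in I_V$ cannot be a single monomial), contradicting $\mathrm{in}_\w(f_i) = \x^\u$.

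I expect the delicate point to be the $\subseteq$ direction, specifically handling the genericity correctly: one must be careful that $\mathrm{in}_\w$ picks out $\varphi_\w$-\emph{maximal} terms (per the definition in the subsection on the Gröbner fan) while $\Min_\w$ and $\preceq_\w$ are defined so that the fiber minimum has \emph{smallest} $\varphi_\w$-value — so a monomial $\x^\u$ is in the initial ideal roughly when $\u$ is \emph{not} $\varphi_\w$-minimal in its fiber, which is exactly the condition $\u \in \nM_\w$. Once the direction of the inequalities is pinned down, the argument is a combination of Proposition \ref{prop:FiberDecomposition}, Lemma \ref{lem:raffinamento}, Lemma \ref{lem:minimo} and genericity, with Dickson's Lemma supplying finiteness for the passage to $\MnM_\w$.
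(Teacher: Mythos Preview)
Your proof is correct and follows essentially the same line as the paper's: the inclusion $\supseteq$ via the binomial $\x^{\u}-\x^{\u_0}$, the inclusion $\subseteq$ by restricting to a single fiber through Proposition~\ref{prop:FiberDecomposition} and deriving a contradiction with minimality of $\u$, and the second equality from the definition of $\MnM_\w$ (you even supply the up-set argument the paper leaves implicit). One small caveat: your parenthetical claim that genericity forces $\preceq_\w$ to be strict on \emph{all} pairs in a common fiber is stronger than what Lemma~\ref{lem:minimo} actually gives (only uniqueness of the minimum), but since $\u_0$ is that unique minimum you only need $\varphi_\w(\u_0)<\varphi_\w(\u)$, which does follow, so the argument stands.
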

 \begin{proof}
 	The second equality is trivial, so we prove the first one. For $\u\in \nM_\w$, let $\u_0$ be the minimum of $\mathcal{F}(\u)$ with respect to $\preceq_\w$. Then $\u-\u_0\in\mathcal{L}_{\mathrm r}(Q)$, and ${\rm supp}(\u)\cap{\rm supp}(\u_0)=\emptyset$ by Lemma \ref{lem:supporto}. Then by definition $\x^\u-\x^{\u_0}\in I_V$, so that $x^\u\in \mathrm{in}_{\preceq_\w}(I_V)$. Therefore the inclusion $(x^\u\ |\ \u \in \nM_\w )\subseteq \mathrm{in}_{\w}(I_V)$ is shown. If it were not an equality, $\mathrm{in}_{\w}(I_V)$ would contain a monomial $\x^{\u_0}$ for some $\u_0\in \Min_\w$. Therefore it would exist a polynomial $f\in I_V$ such that $\mathrm{in}_{\preceq_\w}(f)=\x^{\u_0}$; by Proposition \ref{prop:FiberDecomposition} we can assume that all monomials appearing in $f$ lie in the same fiber, and since ${\u_0}$ is the minimum of its fiber $f$ must  be a single term: $f=\lambda\x^{\u_0}$ for some $\lambda\in \KK$. But $\pi(\x^{\u_0})$ is a monomial in $\K[\t^{\pm 1}]$, so it cannot be zero.
 \end{proof}

 With the notation above, let $\MnM_\w=\{\aa_1,\ldots,\aa_s\}$; for every $i$ let $\mathbf{m}_i$ be the minimum of $\mathcal{F}(\aa_i)$ with respect to $\preceq_\w$, and define $\bb_i=\aa_i-\m_i$. Notice that, by Lemma \ref{lem:supporto}, $\aa_i$ and $\m_i$ have disjoint support, so that $\aa_i=\bb_i^+$ and $\m_i=\bb_i^-$, and $g_{\bb_i}= \x^{\aa_i}-\x^{\m_i}$. Put $\mathcal{B}_\w=\{\bb_1,\ldots, \bb_s \}$ and $\mathcal{G}_\w=\{g_{\bb_i} \ |\ i=1,\ldots, s\}\subseteq I_V$.

 \begin{lemma}[Farkas Lemma, \cite{Ziegler95} Proposition 1.7]
 Let $U$ be a $d\times m$ real matrix, and $\z\in\R^d$. Then one and only one of the following holds:
 \begin{itemize}
   \item[-] there exists a vector $\y\in\R^m$  with $U\y \leq \z$,
   \item[-] there exists a vector $\u\in\R^d$ with $\u \geq 0$, $\u^T U = 0$ and $\u^T\z<0$.
 \end{itemize}
 \end{lemma}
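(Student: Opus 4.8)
The cleanest plan in the present context is to read this statement off from the variant of Farkas' Lemma already recorded as Proposition~\ref{prop:Farkasvariante}; I would also indicate the standard self-contained route for completeness. First, the two alternatives are mutually exclusive by a one-line computation: if $U\y\le\z$ and simultaneously $\u\ge\0$, $\u^T U=0$, $\u^T\z<0$, then $0=\u^T(U\y)\le\u^T\z<0$, the middle step using $\u\ge\0$ together with the componentwise inequality $U\y\le\z$; this contradiction rules out both alternatives holding.

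For the dichotomy I would apply Proposition~\ref{prop:Farkasvariante} with $A:=U$ and $\w:=\z$, so that the $m$ and $n$ of that statement are played here by our $d$ and $m$ respectively. It asserts that $U\y\le\z$ has a solution $\y\in\R^m$ if and only if $\z^T\u\ge0$ for every column vector $\u\ge\0$ with $U^T\u=\0$. Rewriting $U^T\u=\0$ as $\u^T U=0$ and $\z^T\u$ as $\u^T\z$, the negation of the right-hand condition is precisely the second alternative; hence exactly one of the two alternatives holds. (Strictly speaking this equivalence already encodes the ``one and only one'' claim, the first paragraph being merely an explicit check of the trivial direction.)

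Should one prefer a proof not invoking Proposition~\ref{prop:Farkasvariante}, the usual argument is geometric: $U\y\le\z$ is solvable exactly when $\z\in C:=\im(U)+\R^d_{\ge0}$, the sum of the column space of $U$ and the nonnegative orthant. If $\z\notin C$, separate $\z$ from the closed convex cone $C$ to obtain $\u$ with $\u^T\z<0\le\u^T c$ for all $c\in C$; testing against $c=\pm U\y$ for arbitrary $\y\in\R^m$ forces $\u^T U=0$, while testing against $c=\e_i$ forces $\u\ge\0$, which is the second alternative. The only delicate point is the closedness of $C$, needed so that separation delivers the non-strict inequality $\u^T c\ge0$ on all of $C$; I would settle it by observing that $C$ is a finitely generated cone, generated by the columns of $U$, their negatives, and $\e_1,\dots,\e_d$, hence polyhedral, hence closed. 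A Fourier--Motzkin elimination of $y_1,\dots,y_m$ from the system $U\y\le\z$ furnishes an alternative, purely combinatorial proof that sidesteps this topological point and even produces the certificate $\u$ explicitly; that elimination bookkeeping is the only part demanding real work.
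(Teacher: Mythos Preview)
Your argument is correct. The paper, however, does not supply any proof of this lemma: it is simply quoted as a known result from \cite{Ziegler95}, exactly as Proposition~\ref{prop:Farkasvariante} is quoted from \cite{Schrijver86}. So there is nothing to compare against on the paper's side.

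That said, a brief remark on your reduction is worth making. You derive the Farkas Lemma from Proposition~\ref{prop:Farkasvariante}, which is perfectly valid and essentially amounts to observing that the two statements are the same theorem of the alternative, phrased once as an equivalence and once as a dichotomy. Since both are cited without proof in the paper, your reduction does not actually lower the logical dependence of the paper on outside sources; it just shows one citation implies the other. Your self-contained sketch via separation of $\z$ from the polyhedral cone $C=\im(U)+\R^d_{\ge0}$ (or, alternatively, via Fourier--Motzkin elimination) is the genuinely independent route, and is correct as outlined, including the point that finite generation of $C$ guarantees closedness so that the separating hyperplane yields a non-strict inequality on $C$.
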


 \begin{proposition}\label{prop:iniziale2}
 	Let $\w\in\mathcal{W}$; then there is $\mathbf{c}\in\R^m_{\geq 0}$ such that $\mathrm{in}_{\preceq_\w}(I_V)=\mathrm{in}_{\preceq_\mathbf{c}}(I_V)$.
 \end{proposition}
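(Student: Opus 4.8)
The plan is to realize the required non-negative weight $\mathbf{c}$ as a vector lying in $\R^m_{\geq 0}$ with $Q\mathbf{c}=Q\w$, and then to invoke the (unnumbered) Proposition established above, namely that for $\w_1,\w_2\in\mathcal{W}$ the equality $Q\w_1=Q\w_2$ implies $\mathrm{in}_{\w_1}(I_V)=\mathrm{in}_{\w_2}(I_V)$ (recall $\mathrm{in}_{\preceq_\w}$ and $\mathrm{in}_\w$ denote the same thing, the sum of the $\varphi_\w$-maximal terms). Since such a $\mathbf{c}$ automatically lies in $\R^m_{\geq 0}\subseteq\mathcal{W}$, and $\w\in\mathcal{W}$ by hypothesis, that Proposition applies verbatim and yields $\mathrm{in}_{\preceq_\w}(I_V)=\mathrm{in}_{\preceq_\mathbf{c}}(I_V)$. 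So the whole content is the construction of $\mathbf{c}$.

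To build $\mathbf{c}$, note that $\ker(Q)=\mathcal{V}_{\mathrm r}(V)$ is the row space of $V$, so requiring $\mathbf{c}\geq\mathbf{0}$ with $Q\mathbf{c}=Q\w$ amounts to requiring $\y\in\R^n$ with $\w-V^T\y\geq\mathbf{0}$, i.e. $V^T\y\leq\w$. I would obtain such a $\y$ from the Farkas Lemma recalled above applied with $U=V^T$ and $\z=\w$: the alternative conclusion would produce $\u\in\R^m$ with $\u\geq\mathbf{0}$, $\u^TV^T=\mathbf{0}$ (that is $V\u=\mathbf{0}$) and $\w^T\u<0$; but $\u\geq\mathbf{0}$ and $V\u=\mathbf{0}$ mean $\u\in\mathcal{U}$, whereas $\w\in\mathcal{W}=\mathcal{U}^*$ by Proposition \ref{prop:dualityUW} forces $\w^T\u\geq 0$, a contradiction. (Equivalently one may quote Proposition \ref{prop:coWtre} directly, or even Proposition \ref{prop:W}: $\w\in\mathcal{W}=Q^{-1}(\langle Q\rangle)$ gives $Q\w\in\langle Q\rangle$, hence $Q\w=Q\mathbf{c}$ for some $\mathbf{c}\in\R^m_{\geq 0}$ at once.) Having $\y$ with $V^T\y\leq\w$, set $\mathbf{c}:=\w-V^T\y\in\R^m_{\geq 0}$; then $Q\mathbf{c}=Q\w-(QV^T)\y=Q\w$ because $QV^T=0$ (the rows of $Q$ span $\ker V$). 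Thus $\w,\mathbf{c}\in\mathcal{W}$ with $Q\w=Q\mathbf{c}$, and the Proposition quoted in the first paragraph concludes the argument.

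I do not anticipate a real obstacle: the statement is an assembly of facts already available — Proposition \ref{prop:dualityUW} (or \ref{prop:coWtre}, or \ref{prop:W}), the Farkas Lemma, the identity $QV^T=0$, and the "$Q\w_1=Q\w_2$" Proposition. The only points demanding a little attention are matching the shapes of the matrices when invoking the Farkas Lemma (one takes $U=V^T$, so the two size parameters in its statement are interchanged), and observing that the constructed $\mathbf{c}$ lies in $\mathcal{W}$ for free since $\R^m_{\geq 0}\subseteq\mathcal{W}$, which is precisely what legitimizes the use of the earlier Proposition.
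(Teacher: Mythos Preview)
Your argument is correct and, in fact, cleaner than the paper's. You simply observe that $\w\in\mathcal{W}$ gives $Q\w\in\langle Q\rangle$ (via Proposition~\ref{prop:W}, or equivalently Proposition~\ref{prop:coWtre} or Farkas applied to $V^T$), pick $\mathbf{c}\in\R^m_{\geq 0}$ with $Q\mathbf{c}=Q\w$, note $\mathbf{c}\in\R^m_{\geq 0}\subseteq\mathcal{W}$, and invoke the unnumbered Proposition ($Q\w_1=Q\w_2\Rightarrow\mathrm{in}_{\w_1}(I_V)=\mathrm{in}_{\w_2}(I_V)$). This is a complete proof.

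The paper takes a longer and genuinely different route. It works with the finite set $\MnM_\w$ (minimal non-minimal elements in the fibers), forms the vectors $\bb_i=\aa_i-\m_i$ recording each against its fiber minimum, and applies the Farkas Lemma to the system asking for a nonnegative $\mathbf{c}$ with $\mathbf{c}^T\bb_i>0$ for all $i$; the contradiction branch is ruled out using Lemma~\ref{lem:raffinamento}. Then it argues $\mathrm{in}_\w(I_V)\subseteq\mathrm{in}_{\mathbf{c}}(I_V)$ via Lemma~\ref{lem:iniziale}, and equality by a fiber argument. This route tacitly uses genericity of $\w$ (needed for $\MnM_\w$ and Lemma~\ref{lem:iniziale}), whereas your argument does not, so your version actually proves the statement exactly as written, for arbitrary $\w\in\mathcal{W}$. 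The paper's approach does have the side benefit of exhibiting the reduced Gr\"obner basis $\mathcal{G}_\w=\{g_{\bb_i}\}$ explicitly, but for the bare statement your proof is both shorter and stronger.
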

 \begin{proof}   Let $B$ be the $s\times m$ matrix having rows $\bb_1,\ldots, \bb_s$ and let $U$ be the $(m+s)\times m$ matrix defined by
 	$$U=\begin{pmatrix} I_m\\
 	B\end{pmatrix}$$
 	where $I_m$ is the square identity matrix of order $m$. We firstly show that there exists  $\cc\in \R^m_{\geq 0}$  such that $\cc\cdot \bb_i > 0$ for every $i=1,\ldots, s$. Suppose the contrary. Then there does not exist $\y\in\R^m$ such that $(-U)\y\leq \z$ where $\z=(-1,\ldots, -1)^T$.  There exists $\u\in\R^{m+s}$ such that $\u\geq 0$, $\u\not=\mathbf{0}$ and $\u^TU=\mathbf{0}$. Write $\u=(u^*_1,\ldots,u^*_m,u_1,\ldots, u_s)$. Then for $j=1,\ldots, m$
 	$$u^*_j+u_1b_{1j}+\ldots +u_sb_{sj}=0$$
 	which implies
 	$$u_1b_{1j}+\ldots +u_sb_{sj}\leq \mathbf{0}\hbox{ for } j=1,\ldots, m.$$
 	It follows that $$u_1\bb_1+\ldots +u_s\bb_s\leq \mathbf{0},\quad\hbox{ with } u_1,\ldots, u_s\geq 0, \hbox{ not all zero }. $$
 	By Lemma \ref{lem:raffinamento} this implies that $\sum_{i=1}^s u_i\bb_i\preceq_\w 0$. On the other hand by definition we have $\bb_i\succ_\w \mathbf{0}$ and $u_1,\ldots, u_s$ are not all zero, therefore $\sum_{i=1}^s u_i\bb_i\succ_\w \mathbf{0}$, a contradition.\\
 	It follows that $\mathrm{in}_{\w}(I_V)\subseteq \mathrm{in}_{\mathbf{c}}(I_V)$. If this inclusion was not an equality then by Lemma \ref{lem:iniziale} $\mathrm{in}_{\mathbf{c}}(I_V)$ should contain some element $\m$ in $\text{Min}_\w$ and so the  whole fiber $\mathcal{F}(\m)$, contradicting Lemma \ref{lem:iniziale} itself.
 	
 \end{proof}
 By collecting Proposition \ref{prop:iniIVimplicaW} and Lemma \ref{prop:iniziale2} we obtain the following
 \begin{theorem}\label{prop:iniIVsseW} Let $\w\in\R^m$ be generic; then
 	$\mathrm{in}_{\w}(I_V)$ is an initial ideal of $I_V$ if and only if $\w\in\mathcal{W}$.\end{theorem}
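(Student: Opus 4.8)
The statement is meant to be obtained by combining Proposition~\ref{prop:iniIVimplicaW} with Proposition~\ref{prop:iniziale2}, so the plan is simply to assemble these two ingredients, paying attention to one small genericity point.

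For the \emph{only if} direction there is nothing left to prove: if $\mathrm{in}_{\w}(I_V)$ is an initial ideal of $I_V$ then $\w\in\mathcal{W}$ is precisely the content of Proposition~\ref{prop:iniIVimplicaW}.

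For the \emph{if} direction I would argue as follows. Assume $\w\in\mathcal{W}$ is generic. By Proposition~\ref{prop:iniziale2} there is a vector $\mathbf{c}\in\R^m_{\geq 0}$ with
$$\mathrm{in}_{\w}(I_V)=\mathrm{in}_{\mathbf{c}}(I_V).$$
Since $\w$ is generic the left-hand side is a monomial ideal, hence so is $\mathrm{in}_{\mathbf{c}}(I_V)$; thus $\mathbf{c}$ is itself a generic weight vector lying in $\R^m_{\geq 0}$. At this point I would invoke the classical fact recalled in \S\ref{sez:toricideal} (\cite[Cor.~1.10 and Prop.~1.11]{Sturmfels1996}), namely that the initial ideals attached to generic weight vectors in $\R^m_{\geq 0}$ coincide with the initial ideals $\mathrm{in}_{\preceq}(I_V)$ arising from term orders. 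Hence $\mathrm{in}_{\mathbf{c}}(I_V)=\mathrm{in}_{\preceq}(I_V)$ for some term order $\preceq$, and therefore $\mathrm{in}_{\w}(I_V)=\mathrm{in}_{\preceq}(I_V)$ is an initial ideal of $I_V$ in the sense of the Definition preceding Proposition~\ref{prop:iniIVimplicaW}.

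I do not expect a genuine obstacle at this stage: all the substance has already been packed into Lemmas~\ref{lem:raffinamento}--\ref{lem:iniziale} and into the Farkas-type argument of Proposition~\ref{prop:iniziale2} that produces $\mathbf{c}$ with $\mathbf{c}\cdot\bb_i>0$ for every $i$. The only point one must not skip in this final assembly is the remark that $\mathbf{c}$ is automatically generic — this is forced by the equality $\mathrm{in}_{\mathbf{c}}(I_V)=\mathrm{in}_{\w}(I_V)$ together with the monomiality of the right-hand side — since it is exactly what licenses the use of the cited weight-order/term-order equivalence.
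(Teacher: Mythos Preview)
Your proposal is correct and follows exactly the route the paper takes: the theorem is obtained simply by combining Proposition~\ref{prop:iniIVimplicaW} (for the \emph{only if} direction) with Proposition~\ref{prop:iniziale2} (for the \emph{if} direction). Your explicit remark that the vector $\mathbf{c}$ produced by Proposition~\ref{prop:iniziale2} is automatically generic---because $\mathrm{in}_{\mathbf{c}}(I_V)=\mathrm{in}_{\w}(I_V)$ is monomial---is a detail the paper leaves implicit, but it is indeed the point that licenses the appeal to \cite[Cor.~1.10 and Prop.~1.11]{Sturmfels1996}, so you are right to spell it out.
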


Theorem \ref{prop:iniIVsseW}  asserts that $\mathcal{W}$ is the \emph{Gr\"obner region} of $I_V$, in the sense of \cite{MoraRobbiano1988}. It is the support of the \emph{Gr\"obner fan} of $I_V$, whose construction we  sketch briefly in the following.
		Two weight vectors $\w_1,\w_2\in  \mathcal{W}$ are said \emph{equivalent} modulo $I_V$ if $\mathrm{in}_{\w_1}(I_V)=\mathrm{in}_{\w_2}(I_V)$.
				Equivalence classes form relatively open polyhedral cones in $\RR^m$, whose closures are said \emph{Gr\"obner cones}. For $\w\in\coW$ let $\mathcal{C}[\w]$ be the smallest Gr\"obner cone containing $\w$. Then $\w$ is generic if and only if $\mathcal{C}[\w]$ is full-dimensional.
				The Gr\"obner fan of $I_V$ is the collection of Gr\"obner cones $\mathcal{C}[\w]$, for $\w\in \mathcal{W}$.
				\subsection{The Gr\"obner fan and the secondary fan}
				Obviously $\w+\ker Q\subseteq \mathcal{C}[\w]$ for every $\w\in\mathcal{W}$, so that we can consider the image fan via $Q$ of the Gr\"obner fan in $\R^r$.\\
				In this way the Gr\"obner cone and the secondary fan  both live in $\R^r$ and they have the same support $\langle Q\rangle$, by Proposition \ref{prop:W}.\\
					The following is a crucial result by Sturmfels:
					\begin{theorem}\cite[Proposition 8.15]{Sturmfels1996} The Gr\"obner fan refines the secondary fan.
						\end{theorem}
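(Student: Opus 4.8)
The plan is to show that each Gröbner cone is contained in a single chamber of the secondary fan, using the combinatorial description of chambers via Gale duality established in \S\ref{sez:toricideal}. After projecting through $Q$, both fans live in $\langle Q\rangle\subseteq\R^r$ and share this support (Proposition \ref{prop:W}), so it suffices to check that the relation ``$\mathrm{in}_{\w_1}(I_V)=\mathrm{in}_{\w_2}(I_V)$'' is finer than ``$\w_1$ and $\w_2$ lie in the same chamber of the secondary fan''. Equivalently, fixing a generic $\w\in\mathcal{W}$, I would show that the initial ideal $\mathrm{in}_\w(I_V)$ already determines which simplicial cone $\langle V^I\rangle$ (with $|I|=r$, i.e.\ $\langle Q_I\rangle$ on the Gale side) the weight vector $\w$ selects under the bijection \eqref{eq:fanandbunches}.

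The key step is to extract the fan $\Sigma_\w\in\SF(V)$ directly from the reduced Gröbner basis data $\mathcal{G}_\w$ (equivalently from $\mathrm{in}_\w(I_V)=(\x^\u\mid \u\in\MnM_\w)$, Lemma \ref{lem:iniziale}). The idea is standard in the regular-triangulation dictionary: a subset $I$ with $|I|=n$ indexes a maximal cone of $\Sigma_\w$ precisely when its complement $I^c$ (of size $r$) supports a standard monomial pattern — concretely, when no generator $\x^\u$ of $\mathrm{in}_\w(I_V)$ has $\mathrm{supp}(\u)\subseteq I^c$; this is the usual ``the complementary set carries no initial monomial'' criterion. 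One then identifies this with the chamber condition: $\w$ lies in the chamber corresponding to the bunch containing $\langle Q_{I^c}\rangle$ iff for every other admissible $r$-subset the lifted height function picks the triangulation compatible with $I$. Since two equivalent weights $\w_1,\w_2$ give the same $\mathrm{in}_\w(I_V)$, they give the same $\Sigma_\w$, hence $Q\w_1,Q\w_2$ lie in the relative interior of the same secondary chamber; thus the Gröbner cone $\mathcal{C}[\w]$ maps into one secondary chamber, and refinement follows.

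In carrying this out I would first record that for generic $\w$ the Gröbner basis $\mathcal{G}_\w=\{g_{\bb_i}\}$ has each $\bb_i=\bb_i^+-\bb_i^-$ with disjoint support (Lemma \ref{lem:supporto}), so $\mathrm{in}_\w(I_V)$ is squarefree-generated enough to run the support argument; then verify that the collection of $n$-subsets $I$ with ``$I^c$ standard'' is exactly $\Sigma_\w(n)$ for some $\Sigma_\w\in\SF(V)$ — this is where Proposition \ref{prop:FiberDecomposition} and Lemma \ref{lem:minimo} do the work, guaranteeing that for every $\bb$ the polytope $P_\bb$ has a well-defined $\w$-minimal vertex, which is what makes the induced subdivision a genuine (coherent) triangulation. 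Finally I would match this triangulation with the Gale-dual chamber via the correspondence $\langle Q_I\rangle\mapsto\langle V^I\rangle$ recalled before \eqref{eq:fanandbunches}.

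The main obstacle I anticipate is the identification step: proving cleanly that the combinatorial data ``which $\x^\u$ lie in $\mathrm{in}_\w(I_V)$'' pins down the full maximal-cone structure of a fan in $\SF(V)$, rather than just a partial triangulation, and that this fan is the one the secondary fan assigns to $\w$. In Sturmfels's homogeneous setting this is the theory of regular triangulations of a point configuration; here, because $I_V$ is inhomogeneous and the fibers $\mathcal{F}(\u)$ are infinite, one must instead argue via the bounded polyhedra $P_\bb$ and the cone $\mathcal{W}=Q^{-1}(\langle Q\rangle)$ (Propositions \ref{prop:dualityUW}, \ref{prop:W}), checking that lower-boundedness of $\varphi_\w$ on each $P_\bb$ supplies exactly the ``coherence'' needed to realize the subdivision as a chamber of the secondary fan. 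Everything else — the fact that equivalent weights give equal initial ideals, and that $Q$ collapses $\ker Q$ harmlessly — is formal and already in place.
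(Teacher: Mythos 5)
The paper does not prove this statement: it is quoted from Sturmfels \cite[Prop.~8.15]{Sturmfels1996} and used as a black box. The only place where part of Sturmfels's argument is reproduced is the proof of Corollary~\ref{cor:suriezione}, which records the Stanley--Reisner recipe
\[
\Sigma_\mathcal{I}=\{\langle V_J\rangle \mid J\subseteq \{1,\ldots,m\}\ \text{and}\ {\rm supp}(\u)\not\subseteq J\ \text{for every monomial}\ \u \in \mathcal{I}\},
\]
with the crucial condition placed on $J$ itself.

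Your overall strategy is sound and is Sturmfels's: project both fans to $\langle Q\rangle$, extract from $\mathrm{in}_\w(I_V)$ a candidate fan $\Sigma_\w$, and identify it with the fan attached to the secondary-fan chamber containing $Q\w$. But the combinatorial criterion at the heart of the identification step is stated backwards. You claim that $I$ with $|I|=n$ indexes a maximal cone of $\Sigma_\w$ precisely when no generator $\x^\u$ of $\mathrm{in}_\w(I_V)$ has $\mathrm{supp}(\u)\subseteq I^c$. The correct criterion puts the condition on $I$, not its complement: $\langle V_I\rangle\in\Sigma_\w(n)$ if and only if $\mathrm{supp}(\u)\not\subseteq I$ for every monomial $\u\in\mathrm{in}_\w(I_V)$, equivalently $\prod_{i\in I}x_i\notin\sqrt{\mathrm{in}_\w(I_V)}$. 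Your version instead tests whether $I^c$ (a set of $r$ indices) is a face of the simplicial complex of the fan, which is a different and in general false condition; the two happen to agree in small symmetric cases such as $\P^1\times\P^1$, where complements of facets are again facets, which makes the slip easy to overlook.

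A concrete counterexample: take $n=2$, $m=5$, $r=3$ and $V$ with columns $(1,0),(0,1),(-1,1),(-1,0),(0,-1)$ (the del Pezzo surface of degree~$7$). The complete fan has facets $\{1,2\},\{2,3\},\{3,4\},\{4,5\},\{1,5\}$ and, for a suitable generic $\w$, $\sqrt{\mathrm{in}_\w(I_V)}=(x_1x_3,x_1x_4,x_2x_4,x_2x_5,x_3x_5)$. Your criterion rejects the facet $\{1,2\}$ because the generator $x_3x_5$ has support contained in $\{1,2\}^c=\{3,4,5\}$, whereas the correct criterion accepts it since $x_1x_2$ is not in the radical. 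Once the criterion is corrected, the remaining ingredients you invoke --- disjoint supports of the reduced Gr\"obner basis elements (Lemma~\ref{lem:supporto}), existence of the $\w$-minimum on each fiber (Lemma~\ref{lem:minimo}), lower-boundedness of $\varphi_\w$ on $P_\bb$, and the Gale correspondence of \eqref{eq:fanandbunches} --- are indeed the right machinery to carry out the identification.
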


By restricting 	the support of both the Gr\"obner fan and the secondary fan to $\Mov(Q)=\bigcap_{i=1}^m \langle Q^{\{i\}}\rangle$ we get, in the light of \eqref{eq:fanandbunches}, the following result

\begin{corollary}\label{cor:suriezione}
						There is a surjective computable map
						$$\left\{ \hbox{Initial ideals of $I_V$ not containing a power of $x_i$, $\forall i=1,\ldots, m$}\right\}\twoheadrightarrow \PSF(V).$$
						\end{corollary}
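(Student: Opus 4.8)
The plan is to build the asserted map by composing the correspondences already established in the excerpt. First I would recall from Theorem~\ref{prop:iniIVsseW} that the Gröbner region of $I_V$ is exactly $\coW$, and that by Proposition~\ref{prop:W} we have $\coW=Q^{-1}(\langle Q\rangle)$; hence the image under $Q$ of the Gröbner fan of $I_V$ is a polyhedral fan living in $\R^r$ with support $\langle Q\rangle$. By Sturmfels' theorem (\cite[Prop.~8.15]{Sturmfels1996}) this image fan refines the secondary fan, which by \S\ref{ssez:Gale}--\ref{ssez:Gale} and \eqref{eq:fanandbunches} also lives in $\R^r$ with support $\langle Q\rangle$. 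So every full-dimensional Gröbner cone maps into a unique chamber of the secondary fan.

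Next I would cut down to the movable region. Restricting the support of both fans to $\Mov(Q)=\bigcap_{i=1}^m\langle Q^{\{i\}}\rangle$, the chambers of the secondary fan that meet the interior of $\Mov(Q)$ are precisely the projective bunches in $\mathcal{PB}(Q)$, which by \eqref{eq:fanandbunches} correspond bijectively to $\PSF(V)$. On the Gröbner side, a full-dimensional Gröbner cone $\mathcal{C}[\w]$ meets the interior of $\Mov(Q)$ (after applying $Q$) if and only if the associated initial ideal $\mathrm{in}_\w(I_V)$ contains no pure power $x_i^{N}$ of any variable: indeed, by the Remark following the list of facts on $I_V$, completeness gives a binomial $x_i^{N}-\mathbf{x}^{\u^-}$-type relation only when needed, and more precisely $\langle Q^{\{i\}}\rangle$ is the half-space condition that the $i$-th coordinate direction be "usable", which in Gröbner terms translates, via Lemma~\ref{lem:iniziale} and the description of $\Min_\w$, into the absence of $x_i^{N}$ from $\mathrm{in}_\w(I_V)$. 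Thus the set of initial ideals of $I_V$ not containing a power of any $x_i$ is exactly the set of full-dimensional Gröbner cones whose $Q$-image meets $\mathrm{int}\,\Mov(Q)$, and each such cone determines (by the refinement) a well-defined element of $\PSF(V)$; this defines the map.

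It remains to check surjectivity and computability. Surjectivity is immediate from the fact that the Gröbner fan refines the secondary fan: every chamber of the secondary fan inside $\Mov(Q)$, hence every element of $\PSF(V)$, is covered by (finitely many) full-dimensional Gröbner cones, each of which gives an initial ideal mapping to it. Computability follows because the reduced Gröbner bases of $I_V$, hence all its initial ideals, can be enumerated (e.g.\ by GFAN/TiGERS after homogenization, as discussed in \S\ref{sect:exploiting}); testing whether $\mathrm{in}_\w(I_V)$ contains a power of $x_i$ is a finite check on the monomial generators; and the secondary-fan chamber containing the $Q$-image of $\mathcal{C}[\w]$ is found by the linear-programming computation of $\bigcap_{\sigma\ni Q(\w)}\sigma$ over the secondary fan, which in turn yields the corresponding fan in $\SF(V)$ through the index-set bijection $\langle Q_I\rangle\leftrightarrow\langle V^I\rangle$ of \eqref{eq:fanandbunches}.

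**Main obstacle.** The delicate point is the precise translation "$\mathrm{in}_\w(I_V)$ contains no power of $x_i$ $\iff$ $Q(\w)\in\mathrm{int}\,\langle Q^{\{i\}}\rangle$". One inclusion is essentially the completeness Remark: if $Q(\w)$ lies on the wrong side, then there is a non-negative $\u\in\Ls_r(Q)$ supported at $i$ with $\mathbf{x}^{\u}$ forced into $\mathrm{in}_\w(I_V)$, giving a power of $x_i$ after taking a suitable minimal generator. The converse requires showing that if $x_i^{N}\in\mathrm{in}_\w(I_V)$ then $N\mathbf{e}_i$ dominates its whole fiber, so the fiber's $\preceq_\w$-minimum $\m$ has $i\notin\mathrm{supp}(\m)$, whence $N\mathbf{e}_i-\m\in\Ls_r(Q)$ is a non-negative combination witnessing $Q(\w)\notin\mathrm{int}\,\langle Q^{\{i\}}\rangle$; here one must be careful that $\preceq_\w$ refines $\le$ on fibers (Lemma~\ref{lem:raffinamento}) so that $N\mathbf{e}_i$ being in $\nM_\w$ with minimal support — as in Lemma~\ref{lem:supporto} — really does force disjointness of supports. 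I expect this support/fiber bookkeeping, rather than the soft fan-refinement argument, to be where the real work lies.
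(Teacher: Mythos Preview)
Your overall architecture is sound and would lead to the result, but it follows a genuinely different route from the paper. The paper does not pass through the geometric statement ``$Q(\w)\in\mathrm{int}\,\Mov(Q)$'' at all. Instead it writes down the map explicitly: to an initial ideal $\mathcal{I}$ it assigns
\[
\Sigma_\mathcal{I}=\{\langle V_J\rangle \ :\ \mathrm{supp}(\u)\not\subseteq J\ \text{for every monomial}\ \x^\u\in\mathcal{I}\},
\]
and reads off directly from this formula that $\langle\v_i\rangle\in\Sigma_\mathcal{I}(1)$ if and only if no pure power of $x_i$ lies in $\mathcal{I}$ (take $J=\{i\}$). Surjectivity is then obtained by noting that $\sqrt{\mathcal{I}}$ is recovered from $\Sigma$ as the Stanley--Reisner ideal of the associated simplicial complex. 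So the paper's route is purely combinatorial and bypasses entirely the obstacle you flag; what your approach buys is a cleaner geometric picture tying the condition to $\Mov(Q)$, at the price of having to prove that picture.

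On that obstacle: the equivalence you isolate is correct, but your sketched verification does not parse as written. The sentence ``$N\e_i-\m\in\Ls_r(Q)$ is a non-negative combination witnessing $Q(\w)\notin\mathrm{int}\,\langle Q^{\{i\}}\rangle$'' is confused on two counts: the vector $N\e_i-\m$ is not non-negative (it is $N$ in position $i$ and $-m_j\le 0$ elsewhere), and membership in $\Ls_r(Q)$ tells you $V(N\e_i)=V\m$, which is a statement about $V$, not about the location of $Q\w$. To make your route rigorous you would still need to invoke the regular-triangulation description underlying \cite[Prop.~8.15]{Sturmfels1996}: once you know $\Sigma_\mathcal{I}$ is the regular triangulation attached to $\w$, the condition ``vertex $i$ is used'' is the ``no power of $x_i$'' condition by the explicit formula above, and the link to $\Mov(Q)$ then comes from the bunch correspondence \eqref{eq:fanandbunches}, not from a direct fiber inequality.
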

					\begin{proof}
					
					We follow the argument proving \cite[Proposition 8.15]{Sturmfels1996}. Given  an initial ideal $\mathcal{I}$ for $I_V$ the corresponding fan can be computed as
					\begin{align*} \Sigma_\mathcal{I}&=\{\langle V^I\rangle \ |\ I\subseteq \{1,\dots m\} \hbox{ and } {\rm supp}(\u)\cap I\not=\emptyset \hbox{ for some monomial } \u \in \mathcal{I}\}\\
					&=\{\langle V_J\rangle \ |\ J\subseteq \{1,\dots m\} \hbox{ and } {\rm supp}(\u)\not\subseteq  J \hbox{ for every monomial } \u \in \mathcal{I}\}
					\end{align*}
Then we see that for $i=1,\ldots, m$
\begin{align*} \langle \v_i\rangle \in \Sigma_\mathcal{I}(1) &\Leftrightarrow {\rm supp}(\u)\not\subseteq  \{i\}  \hbox{ for every monomial } \u \in \mathcal{I}\\
&\Leftrightarrow \mathcal{I} \hbox{ does not contain a power of $x_i$}.
\end{align*}
Conversely, from $\Sigma$ we can recover the radical $\sqrt{\mathcal{I}}$ as the \emph{Stanley-Reisner ideal} $\Delta(\Theta)$ of the simplicial complex $\Theta$ associated to $\Sigma$:
					\begin{align*}
					\Theta &= \{ J\subseteq \{1,\ldots, m\}\ |\ \langle V_J\rangle \in\Sigma\}\\
						\sqrt{\mathcal{I}} &= \Delta(\Theta)= (\prod_{i\in I} x_i\ |\ I\in\{1,\ldots, m\} \hbox{ and } I\not\in\Theta\}.
					\end{align*}
                    This shows that the correspondence $\mathcal{I}\mapsto \Sigma_\mathcal{I}$ is surjective.\end{proof}
					
In general the correspondence $\mathcal{I}\mapsto \Sigma_\mathcal{I}$ is not injective; see Example \ref{ex:raffinamentoproprio} below. It is shown in \cite[Corollary 8.9]{Sturmfels1996} that $\mathcal{I}$ is radical if and only if $\Sigma_\mathcal{I}$  gives rise to a smooth toric variety i.e. all cones in $\Sigma_\mathcal{I}(n)$ have normalized volume 1. It follows that the correspondence is injective when $V$ is unimodular (see \cite[Remark 8.10]{Sturmfels1996}, taking into account that $V$ is a CF--matrix,  so that its maximal minors are coprime).
					
					\section{Calculating $\PSF(V)$}\label{sec:algorithm}
                 Corollary \ref{cor:suriezione} provides an algorithm for computing the set $\PSF(V)$ for any F--matrix $V$:
                 \subsection{The \lq\lq $G$-cercafan\rq\rq\  algorithm \label{subsect:G-cercafan}}
					\begin{enumerate}
					\item Compute the toric ideal $I_V$.
                    \item Compute the Gr\"obner fan of $I_V$.
                    \item For any full-dimensional Gr\"obner cone $\mathcal{C}[\w]$ in the Gr\"obner fan, compute the initial ideal $\mathcal{I}_\w=\mathrm{in}_\w(I_V)$; \item Eliminate those initial ideals containing a power of some $X^i$.
                    \item For the remaining initial ideals $\mathcal{I}$, compute the fan $\Sigma_\mathcal{I}$ defined in the proof of Corollary \ref{cor:suriezione}.
                    \item Remove duplicate fans (if any).
                    \end{enumerate}

                    \begin{remark}  Steps (2) and (3) may be englobed: in fact the software we used produces directly the initial ideals without making use of weight vectors. \end{remark}

\subsection{Exploiting the existing software}\label{sect:exploiting}
Initial ideals can be determined by  software computing the Gr\"obner fan of toric ideals, such as TiGERS \cite{TiGERS} or CATS  \cite{Jensen1} (incorporatd in GFAN \cite{Gfan}); we mainly used the last one.\\
As well explained in \cite{HuberThomas2000}, the general idea is starting with an arbitrary term order and generate successively all the initial ideals by going across \lq\lq facet binomials \rq\rq\  in the corresponding Gr\"obner bases (flips).

There are two main strategies: for small size inputs, an exhaustive search is possible. When the latter is impracticable, a \lq\lq Gr\"obner walk\rq\rq\  can be implemented, progressively producing new Gr\"obner cones and consequently new fans.\\

This software always requires homogeneous toric ideals as input.
Since  toric ideals associated to F--matrices are never homogeneous, we adapted it to our situation by the following procedure:
\begin{itemize}
	\item Homogenize $I_V$ by adding an auxiliary variable $x_{m+1}$, getting a homogeneous toric ideal $HI_V$.
	\item Compute the set of initial ideals of $HI_V$.
	\item Eliminate those initial ideals containing a power of $x_{m+1}$.
	\item Dehomogenize the remaining initial ideals with respect to $x_{m+1}$
    \item For any monomial ideal so obtained, pick up a minimal set of generators.
\end{itemize}
It is possible to show that this procedure is correct, i.e. it produces the Gr\"obner fan for $I_V$.\\

In conclusion, we get an algorithm which is much more efficient than \lq\lq cercafan\rq\rq; however it presents two main disadvantages:
\begin{itemize}
	\item the same fan is computed many times;
	\item non-projective fans are not seen by Gr\"obner methods.
\end{itemize}
\section{Some examples}\label{sec:examples}

\subsection{An example of proper refinement}\label{ex:raffinamentoproprio}
Let $n=4, r=3$ and
$$Q=\begin{pmatrix} 1& 1&0& 0& 2& 0&0\\
0& 0& 1& 1& 2& 0&0\\ 0& 0& 0& 0& 1& 1&1\end{pmatrix}, \quad V=\G(Q)=\begin{pmatrix}
1& 1&0& 2&-1&0&1\\
0&2&0&2&-1&0&1\\
0&0&1&-1&0&0&0\\
0&0&0&0&0&1&-1
\end{pmatrix}$$

The toric ideal is
$$I_V=(x_1x_2-x_3x_4, x_5x_6x_7-1, x_3x_4x_5-x_6x_7, x_6^2x_7^2-x_3x_4).$$

$\SF(V)$ contains three fans, and all of them are projective.

There are six reduced Grobner bases. Every chamber in the secondary fan consists of two Gr\"obner cones. For example the following are two Gr\"obner bases whose initial ideals have the same radical:
\begin{eqnarray*}
	GB_1 &=&
	(x_1x_2-x_3x_4,
	x_3x_4x_5-x_6x_7,
	x_5x_6x_7-1, x_6^2x_7^2-x_3x_4)
	\\ In_1&=&(x_1x_2,x_3x_4x_5,x_5x_6x_7, x_6^2x_7^2 ) \\
	GB_2&=& (x_1x_2-x_3x_4,
	x_6x_7-x_3x_4x_5,
	x_3x_4x_5^2-1)\\
	In_2&=&(x_1x_2,x_6x_7,x_3x_4x_5^2 )
\end{eqnarray*}
\begin{figure}
\begin{center}
\includegraphics[width=8truecm]{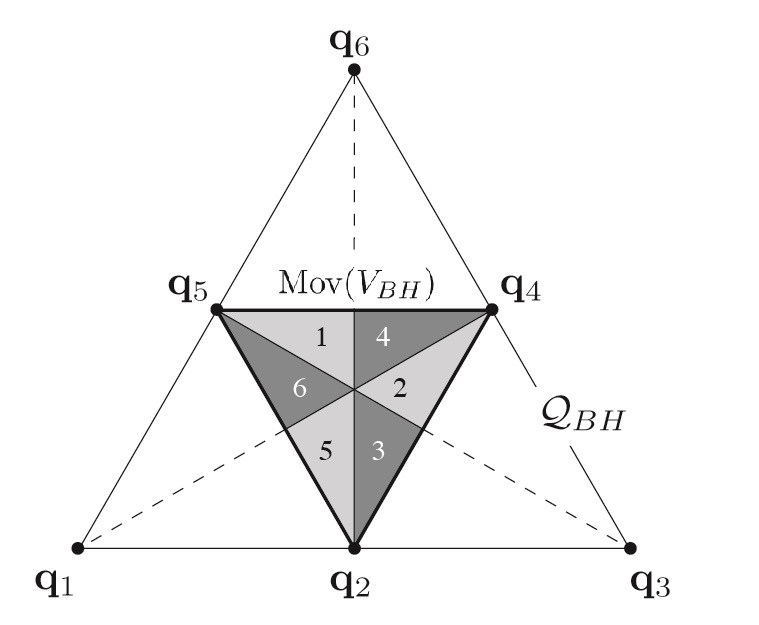}
\caption{\label{fig1}The section, by the standard simplex in $\RR^3$, of the effective cone $\gkz_{BH}:=\langle Q_{BH}\rangle$ and the movable cone $\Mov(V_{BH})$ in \cite[Ex.\,10.2]{Berchtold-Hausen}.}
\end{center}
\end{figure}
\subsection{A non-projective fan}\label{ssez:example}
Let us consider the well-known Berchtold-Hausen example \cite[Ex.\,10.2]{Berchtold-Hausen}, whose weight and fan matrices can be presented as follows
$$Q_{\text{BH}}=\begin{pmatrix} 1& 1&0& 0& 1& 0\\
0& 1& 1& 1& 0& 0\\ 0& 0& 0& 1& 1& 1\end{pmatrix}\quad
V_{\text{BH}}=\G\left(Q_{\text{BH}}\right)=\begin{pmatrix}
1& 0&0& 0&-1&1\\
0&1&0&-1&-1&2\\
0&0&1&-1&0&1
\end{pmatrix}$$
Figure \ref{fig1} represents a section of the effective cone $\mathcal{Q}_{\text{BH}}:=\langle Q_{\text{BH}}\rangle\subset\R^3$ with the standard simplex in $\R^3$, well describing the secondary fan.

The toric ideal is
$$I_{V_{\text{BH}}}=(x_1x_5-x_3x_4, x_2x_3-x_5x_6, x_1x_2-x_4x_6, x_4x_5x_6-1, x_1x_5^2x_6-x_3).$$

 $\SF(V_{\text{BH}})$ contains eight fans, but GFAN returns only six  inital ideals having different radicals: there are two non-projective fans. The six chambers enumerated in figure \ref{fig1} give nef cones of the associated projective varieties. The two non-projective varieties shares the same nef cone, given by the intersection of all the six chambers and generated by the anti-canonical class.

\section{Open problems}\label{sec:problems}

\subsection{Recovering the initial ideal from the fan} For fans in $\PSF(V)$ whose inverse image, by the map in Corollary \ref{cor:suriezione}, is a singleton, it is possible to recover, from the fan, a set of generators of the  initial ideal, by choosing, in a universal Gr\"obner basis $\mathcal{U}_V$ for $I_V$, those binomials having a monomial support in the fan and considering, in each of them, the complementary monomial. This procedure can be performed also when the inverse image contains more than one initial ideal, or even when the fan is not projective. Of course,  in the latter case, the obtained monomial ideal will not be an initial ideal of $I_V$. However one can ask if it may be in some way significant and providing some interesting  informations about the original fan. Anyway, this question seems to be related to the more general one: are non projective fans be detectable by algebraic methods?

\subsection{The irrelevant ideal and Alexander duality} When passing from the initial ideal to the fan via the map of Corollary \ref{cor:suriezione}, one loses the information given by exponents in the generators of the ideal. This happens because the fan is only determined by the radical of the initial ideal, and in general initial ideals of $I_V$ are not radical. Recall that the \emph{irrelevant ideal} of a toric variety is the reduced monomial ideal having a  minimal generator for each maximal cone in the fan, namely the product of the variables not indexed by elements of the list corresponding to the maximal cone. For complete $\Q$-factorial toric variety, the irrelevant ideal is the Alexander dual of the Stanley-Reisner ideal of the fan.  In \cite{Miller1998,Miller2000}, Miller provides a construction of Alexander duality  for monomial ideals which may also be not radical. It would be interesting to study  Alexander duals of initial ideals of $I_V$  in the non-radical case, and investigate  which kind of information they carry on about the geometry of the corresponding toric varieties.

\subsection{The state polyhedron} It is well-known that, for homogeneous toric ideals, the Gr\"obner region coincides with the whole $\RR^m$, so that the Gr\"obner fan results to be polytopal and initial ideals can be recovered as vertices in the \emph{state polytope} associated to $I_V$ (see \cite[Chapter 2]{Sturmfels1996}). Of course in our case the Gr\"obner fan is not polytopal, since fibers are infinite sets.  Nevertheless, the Minkowski sum of Gr\"obner fibers give rise to a poyhedron see \cite[Sections 2-4]{Babson2003}, the \emph{state polyhedron} of $I_V$, whose normal fan coincides with the Gr\"obner fan of $I_V$.
The lower boundary of the state polyhedron should be the analog of the state polytope in the complete case. We intend to deepen this investigation in a future work.

\subsection{Deforming the secondary fan}\label{ssez:deforming}

Consider the Berchtold-Hausen example presented in the previous \S~\ref{ssez:example} and move slightly a generator of the movable cone $\Mov(V_{\text{BH}})$, e.g. $\q_4$, to a near rational vector still belonging to the cone generated by the nearest generators of the effective cone $\langle Q_{\text{BH}}\rangle$, that is $\langle \q_3,\q_6\rangle$. This produces a deformation of the secondary fan represented in figure \ref{fig2} by choosing, e.g.,
\begin{equation}\label{q4}
  \q_4:=\left(
     \begin{array}{c}
       0\\
       1\\
       2\\
     \end{array}
   \right)
\end{equation}
\begin{figure}
\begin{center}
\includegraphics[width=7truecm]{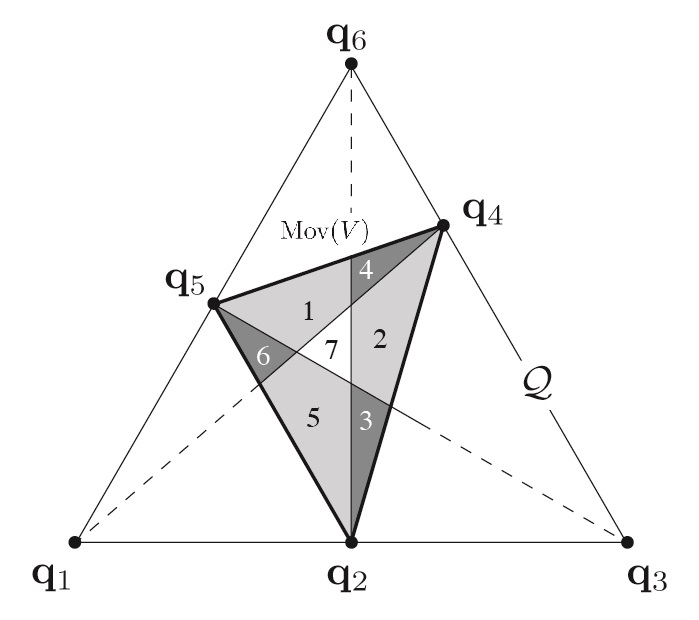}
\caption{\label{fig2}The section, by the standard simplex in $\RR^3$, of the effective cone $\gkz=\langle Q\rangle$ and the movable cone $\Mov(V)$, in the example studied in \S~\ref{ssez:deforming}.}
\end{center}
\end{figure}
\noindent that is
\begin{equation*}
Q:=\left(
     \begin{array}{cccccc}
       1&1&0&0&1&0\\
      0&1&1&1&0&0\\
       0&0&0&2&1&1 \\
     \end{array}
   \right)\ ,\quad V=\G(Q)=\left(
  \begin{array}{cccccc}
    1&0&0&0&-1&1 \\
    0&1&0&-1&-1&3 \\
    0&0&1&-1&0&2 \\
  \end{array}
\right)\,.
\end{equation*}
This example has been extensively studied by the authors in \cite[\S~3]{RT-r2proj}, where we listed all the 8 fans in $\SF(V)$ and in $\SF(V_{\text{BH}})$, by running  the Cercafan algorithm, implemented as a Maple routine. Fans in $\SF(V)$ are exactly the same of those in $\SF(V_{\text{BH}})$ after replacing $\v_6=(1\,,\,2\,,\,1)^T$ with $\v_6=(1\,,\,3\,,\,2)^T$. In particular the 6 projective varieties assigned by chambers from 1 to 6 in figure \ref{fig1} remain projective and assigned by chambers in figure 2 admitting the same number. Moreover the 2 non-projective fans in $\SF(V_{\text{BH}})$ sharing the same nef cone given by $\langle(1\,,\,1\,,\,1)^T\rangle$, and represented by the barycenter of the triangle in figure \ref{fig1}, give rise to a new projective fan represented by chamber 7 in figure \ref{fig2} and to a non-projective one, whose associated toric variety has trivial nef cone.

The just described situation, determined by the particular choice (\ref{q4}), is actually the generic situation when $\q_4$ runs over all possible rational vectors in the open cone $\Relint\langle \q_3,\q_6\rangle$. Such a variation is given by
$$\forall\,p,q\in\N\setminus\{0\}:(p,q)=1\quad\q_4=\left(
     \begin{array}{c}
       0\\
       q\\
       p\\
     \end{array}
   \right)\in\Relint\left\langle\begin{array}{cc}
       0&0\\
       1&0\\
       0&1\\
     \end{array}\right\rangle$$
determining the following variations of weight and fan matrices:
\begin{eqnarray*}
Q_{p/q}&:=&\left(
     \begin{array}{cccccc}
       1&1&0&0&1&0\\
      0&1&1&q&0&0\\
       0&0&0&p&1&1 \\
     \end{array}
   \right)\\
   V_{p/q}=\G(Q_{p/q})&=&\left(
  \begin{array}{cccccc}
    1&0&0&0&-1&1 \\
    0&1&q-1&-1&-1&p+1 \\
    0&0&q&-1&0&p \\
  \end{array}
\right)
\end{eqnarray*}

Considering only the 7-th chamber, gives rise to a family $\mathcal{X}^7\longrightarrow \Q$ of complete and $\Q$-factorial toric varieties  whose generic fibre $X^7_{p/q}$ is projective and admitting a special non projective fibre $X^7_1$.

On the other hand, considering the 8-th chamber gives rise to a family $\mathcal{X}^8\longrightarrow \Q$ of never projective, complete, $\Q$-factorial toric varieties, whose generic fibre $X^8_{p/q}$ does not admit any non-trivial nef divisor and admitting a special fibre $X^8_1$ endowed with a non trivial nef divisor (in the present example represented by the anti-canonical one).

Finally chambers from 1 to 6 give rise to 6 families $\mathcal{X}^i\longrightarrow B$ of projective $\Q$-factorial toric varieties.

We believe these are interesting phenomena which should often appear in studying toric deformations of toric varieties, but we are not aware of any theoretical explanation clarifying the described situation. An implementation of algorithms here presented can certainly produce a number of evidences over which trying to understand something more.

\end{document}